\documentclass{amsart}

\usepackage{amssymb}
\usepackage[all]{xy}
\usepackage{amsmath, amsfonts, amsthm , amssymb} 
\usepackage{mathrsfs}
\usepackage{color}
\usepackage{hyperref}
\usepackage{enumitem}
\usepackage{array}

\input xypic 
\usepackage{tikz}
\usepackage{tikz-cd}
\usepackage{tikz-3dplot}

\usetikzlibrary{shapes.geometric, calc}

\usetikzlibrary{decorations.markings,shapes.arrows, shapes.symbols}
    
\tikzset{paint/.style={ draw=#1!50!black, fill=#1!50 },     decorate with/.style={decorate,decoration={shape backgrounds,shape=#1,shape size=1mm}}}
 
\newtheorem{theorem}{Theorem}[section]
\newtheorem{proposition}[theorem]{Proposition}
\newtheorem{lemma}[theorem]{Lemma}

\newtheorem{problem}[theorem]{Problem}

\theoremstyle{definition}
\newtheorem{definition}[theorem]{Definition}
\newtheorem{remark}[theorem]{Remark}

\newcounter{bean}

\newcommand{\N}{\mathbb{N}}
\newcommand{\Q}{\mathbb{Q}}
\newcommand{\R}{\mathbb{R}}
\newcommand{\C}{\mathbb{C}}
\newcommand{\Z}{\mathbb{Z}}
\newcommand{\tor}{g} 

\newcolumntype{C}[1]{>{\centering\let\newline\\\arraybackslash\hspace{0pt}}m{#1}}

\begin{document}

\title[]
{On the homotopy types of $4$-dimensional toric orbifolds}
\author[T. Cutler]{Tyrone Cutler}
\address{Beijing Institute of Mathematical Sciences and Applications, Beijing, China}
\email{tyronecutler@bimsa.cn}

\author[T. So]{Tseleung So}
\address{Institute of Mathematical Science, Pusan National University, Busan 46241, Republic of Korea}
\email{larry.so.tl@gmail.com}

\subjclass[2020]{
Primary: 57S12, 55N45;  Secondary: 57R18, 13F55}

\keywords{Cohomological Rigidity Problem, Pontryagin squares, Postnikov squares, proper isomorphisms}
\thanks{The second author is supported by the National Research Foundation of Korea(NRF) grant funded by the Korea government(MSIT) (RS-2025-00555914).}

\begin{abstract}
The cohomological rigidity problem for toric orbifolds asks when an integral cohomology isomorphism implies a homotopy equivalence. In this paper we reformulate the cohomological rigidity problem in the context of $4$-dimensional toric orbifolds by introducing what we call proper isomorphisms, a variant of a concept studied by J.H.C. Whitehead. We prove that each proper isomorphism class of $4$-dimensional toric orbifolds contains at most two distinct homotopy types, and that the two classifications agree in certain special circumstances.
\end{abstract}

\maketitle

\section{Introduction}

A \emph{toric orbifold} is a $2d$-dimensional compact orbifold equipped with an effective action of a $d$-dimensional torus such that the action is locally equivariantly modelled on the standard $T^d$-action and its orbit space is a $d$-dimensional simple convex polytope. Alternatively, there is an equivalent combinatorial way to describe a toric orbifold which is recalled in Section 5. A toric orbifold is called a \emph{quasitoric manifold} if it is smooth.

In toric topology, the \emph{cohomological rigidity problem}, originally posed by Masuda and Suh~\cite{MS08}, has been a topic of much recent interest. It asks: are two quasitoric manifolds homeomorphic or diffeomorphic if they have isomorphic cohomology rings?
At first glance, it may seem that the answer should usually be no, but as of yet there is no example of a negative answer appearing in the literature. On the contrary, it has been shown that the question has an affirmative answer when restricted to several subclasses of quasitoric manifolds:
4-dimensional quasitoric manifolds~\cite{OR}, Bott manifolds~\cite{CHJ}, certain generalized Bott manifolds~\cite{CMS-tr}, quasitoric manifolds with second Betti number equal to $2$~\cite{CPS}, and 6-dimensional quasitoric manifolds associated with Pogorelov polytopes~\cite{BEMPP}. Variations of this problem have also been widely studied.

It is known that cohomological rigidity does not hold for toric orbifolds, as there are weighted projective spaces with isomorphic cohomology rings but different homeomorphism types. However, it is true that two weighted projective spaces with the same cohomology rings necessarily share the same homotopy type~\cite{BFNR}.
This naturally leads to the study of cohomological rigidity for toric orbifolds with respect to homotopy equivalence.

The level of difficulty of this problem depends on the coefficients under consideration.
Over the rationals, Panov and Ray proved the rational formality of quasitoric manifolds~\cite{PR}, implying that two quasitoric manifolds $X$ and $X'$ are rationally homotopy equivalent if and only if $H^*(X;\Q)\cong H^*(X';\Q)$. Furthermore, Fu, Song, Theriault, and the second author showed that, for certain quasitoric manifolds, homotopy equivalence holds after localizing away from a finite set of primes whenever their localized cohomology rings are isomorphic~\cite{FSST}. Integrally, Fu, Song, and the second author proved that $4$-dimensional toric orbifolds are homotopy equivalent if their integral cohomology rings are isomorphic and contain no $2$-torsion~\cite{FSS0}.

The goal of this paper is to study the cohomological rigidity problem for $4$-dimensional toric orbifolds in the general case. Previously, the second author showed that $4$-dimensional toric orbifolds with isomorphic cohomology rings have the same stable homotopy types~\cite{So}. However, improving this result to obtain an ordinary homotopy equivalence appears to be difficult. Instead, we consider a variation of the cohomological rigidity problem, formulated as follows.

Let $X$ be a $4$-dimensional toric orbifold. It is known~\cite{Fis,Jor,KMZ} that its integral cohomology groups are:
\begin{equation}\label{table_hmlgy toric orb}
\begin{tabular}{C{2.4cm}|C{1cm}|C{1cm}|C{1cm}|C{1cm}|C{1cm}|C{1cm}}
$i$		&$0$	&$1$	&$2$	&$3$	&$4$	&$\geq5$\\
\hline
$H^i(X;\Z)$	&$\Z$	&$0$	&$\Z^n$	&$\Z/\tor$	&$\Z$	&$0$
\end{tabular}
\end{equation}
where $n$ and $g$ are positive integers. If $X'$ is a second $4$-dimensional toric orbifold and $H^*(X;\Z)\cong H^*(X';\Z)$ as modules, then we say that $X$ and $X'$ are \emph{properly isomorphic} whenever there exist ring isomorphisms
\[
\varphi_t\colon H^*(X;\Z/t)\to H^*(X';\Z/t)
\quad
\text{for}\;t\in\{\tor,2\tor,\infty\},
\]
where $\Z/\infty$ means $\Z$, fitting into a commutative diagram
\[
\xymatrix{
H^*(X;\Z)\ar[r]^-{\rho}\ar[d]^-{\varphi_{\infty}}	&H^*(X;\Z/2\tor)\ar[r]^-{\rho}\ar[d]^-{\varphi_{2\tor}}	&H^*(X;\Z/\tor)\ar[d]^-{\varphi_{\tor}}\\
H^*(X';\Z)\ar[r]^-{\rho}						&H^*(X';\Z/2\tor)\ar[r]^-{\rho}							&H^*(X';\Z/\tor)
}
\]
where the $\rho$'s are reduction maps.

Observe that if $X$ and $X'$ are homotopy equivalent, then they are properly isomorphic. In this paper we investigate conditions  under which the converse holds.
\begin{problem}\label{problem_new cohomological rigidity problem}
If $X,X'$ are $4$-dimensional toric orbifolds which are properly isomorphic, are they homotopy equivalent? 
\end{problem}
\noindent The main result of this paper provides a partial answer to this problem. In particular, we show that there is a single obstruction to a proper isomorphism being induced by a homotopy equivalence, and this measures the failure to commute with a certain Pontryagin square.

For a positive integer $a$, if $a=2^rq$ where $r\geq0$ and $q$ is odd, then the \emph{$2$-component} of $a$ is denoted $\nu_2(a)=2^r$.

\begin{theorem}\label{thm_main}
Let $X, X', X''$ be $4$-dimensional toric orbifolds such that their cohomology groups are isomorphic and given by Table~\eqref{table_hmlgy toric orb}. Suppose that $\tor$ is even.
\begin{enumerate}[label=(\alph*)]
\item\label{thm_main_part b}
If $X,X'$ and $X''$ are properly isomorphic, then at least two of them are homotopy equivalent.

\item\label{thm_main_part a}
Further, assume that $\nu_2(\tor)=2$ and $n=1$. Then
$X$ and $X'$ are homotopy equivalent if and only if they are properly isomorphic.
\end{enumerate}
\end{theorem}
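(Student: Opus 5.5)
I would first obtain a cell structure adapted to the cohomology table. Since $H^3(X;\Z)\cong\Z/\tor$ is the only torsion, $X$ is simply connected (its orbit space is a polygon, and the isotropy data forces $\pi_1=0$). Hence $X$ has a minimal cell structure
\[
X\simeq \Bigl(\bigvee_{n-1} S^2\vee P^3(\tor)\Bigr)\cup_f e^4,
\qquad f\in\pi_3\Bigl(\bigvee_{n-1}S^2\vee P^3(\tor)\Bigr),
\]
where $P^3(\tor)=S^2\cup_\tor e^3$ is the Moore space. So the homotopy type of $X$ is the orbit of $f$ under $\pm1$ on the top cell and under self-equivalences of the $3$-skeleton $A=\bigvee S^2\vee P^3(\tor)$.

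Next I would compute $\pi_3(A)$ with the Hilton--Milnor theorem and the known value $\pi_3(P^3(\tor))$. For $\tor$ even this is an extension of $\Z/\tor$ by $\Z/2\tor$ (it is $\Z/4$ when $\nu_2(\tor)=2$ and $\tor=2$). The result is that $\pi_3(A)$ splits as a sum of three kinds of terms. There are Whitehead products and Hopf-type classes on the $S^2$'s; these form a symmetric bilinear form, which is detected by integral cup products. There are cross terms between the $S^2$'s and the bottom cell of $P^3(\tor)$; these are detected by mod $\tor$ and mod $2\tor$ cup products (the $\Z/2\tor$ summand). Finally there is the $\pi_3(P^3(\tor))$ component. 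I would then express each coordinate of $f$ through the ring structures $H^*(X;\Z/t)$ for $t\in\{\tor,2\tor,\infty\}$ together with the reduction maps. The goal is to show that a proper isomorphism $\varphi$ can be realized by a self-equivalence $h$ of $A$ such that $h\circ f$ and $f'$ agree in every coordinate except possibly one element of order $2$. That element should lie in the image of the Whitehead product $[\iota_2,\iota_2]$ on the bottom cell of $P^3(\tor)$, i.e. the class detected by the Pontryagin (Postnikov) square
\[
H^2(-;\Z/\tor)\to H^4(-;\Z/2\tor).
\]
This step is the main obstacle. It requires showing that the self-equivalences of $A$ realize every automorphism of cohomology compatible with the three reductions. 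It also requires checking that the residual ambiguity collapses to a single $\Z/2$. I would realize matrix automorphisms on $H^2$ by wedges of degree maps and Whitehead-product corrections, then track how they act on the $\Z/2\tor$ summands. The case distinction on $\nu_2(\tor)$ will enter here.

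Part (a) then follows by pigeonhole. Fix proper isomorphisms $X\to X'$ and $X\to X''$. Each attaching map $f'$, $f''$ is equivalent to $h\circ f$ plus an element $\epsilon\in\Z/2$. Among three values in $\Z/2$, two coincide, so two of the spaces are homotopy equivalent. To make this honest, I must show that the obstruction $\epsilon$ is well defined relative to the chosen $\varphi$ and additive under composing proper isomorphisms. Equivalently, there are at most two homotopy types per class.

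For part (b), the ``only if'' direction was observed in the introduction. For the ``if'' direction, with $n=1$ we have $A=P^3(\tor)$ and $H^2(X;\Z)\cong\Z$. The self-maps of $P^3(\tor)$ include maps of degree $k$ on the bottom cell for $k$ a unit mod $\tor$. When $\nu_2(\tor)=2$, I would exhibit a self-equivalence, for example one acting by $-1$ or by $1+\tor/2$ on the bottom cell and combined with $-1$ on the top cell, that fixes the cohomology data up to the allowed isomorphism but shifts the Pontryagin-square obstruction by the nontrivial element of $\Z/2$. This kills the obstruction, so properly isomorphic implies homotopy equivalent. The computation uses the fact that the Pontryagin square is quadratic, so $k^2\equiv 1+\tor$ modulo $2\tor$ for a suitable odd $k$. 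That congruence is possible exactly when $\nu_2(\tor)=2$.
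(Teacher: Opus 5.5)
Your outline of part (a) is essentially the paper's argument. The obstruction is $[\mathcal{P}_r,\varphi](w)$, which has order at most $2$ and is additive under composition of proper isomorphisms; its vanishing forces $X\simeq X'$ by Lemma~\ref{lemma_criteria}, and pigeonhole finishes. The realization problem you call the main obstacle is bypassed by Lemma~\ref{lemma_basis define skeleton}, which re-chooses the cell structure of $X'$ so that $\varphi$ carries a cellular basis to a cellular basis.

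Part (b), however, has a genuine gap, and it starts with your cell structure. Since $H_2(X;\Z)\cong\Z^n\oplus\Z/g$, the $3$-skeleton is $\bigvee_{i=1}^{n}S^2\vee P^3(g)$, not $\bigvee_{n-1}S^2\vee P^3(g)$. So for $n=1$ you work with $A=P^3(g)$ and discard exactly the summand that is needed. The mechanism also fails on its own terms. The congruence $k^2\equiv 1+g\pmod{2g}$ has no solution when $\nu_2(g)=2$: with $g=2q$, $q$ odd, it reads $k^2\equiv 3\pmod 4$ (it is solvable exactly when $\nu_2(g)\ge 8$). More generally, a self-equivalence of $P^3(g)$ acts on $\pi_3(P^3(g))$ by $k^2$ with $k$ odd, so it never changes $c\bmod 4$, i.e.\ never changes $\mathcal{P}_1(w)$; a sign on the top cell helps only when $c$ is odd. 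Most importantly, your argument uses no property of toric orbifolds, and the statement is false without one. In $\mathscr{C}_{1,2}$ take $S^2\vee P^3(2)\vee S^4$ and $(S^2\vee P^3(2))\cup_{2\bar\eta}e^4$. All products in positive degrees vanish over $\Z$, $\Z/2$ and $\Z/4$, so the identity on cellular bases is a proper isomorphism. Yet $\mathcal{P}_1$ vanishes on $H^2(-;\Z/2)$ of the first and not of the second, by Lemma~\ref{lemma_hilton milnor coeff and cup prod}.

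The missing idea is to mix the free class into the torsion class: replace $\varphi_g(w)=w'$ by $w'+\frac g2u'$. Geometrically this is the self-equivalence $(j+\frac g2\imath)\vee 1$ of $S^2\vee P^3(g)$, where $j$ and $\imath$ include $S^2$ and the bottom cell of $P^3(g)$. It sends $f'=a_{11}\eta_1+b_1\bar w_1+c'\bar\eta$ to an attaching map with $b_1$ changed by $\frac g2 a_{11}$ modulo $g$ and $c'$ changed by $\frac{g^2}{4}a_{11}+g b_1$ modulo $2g$. This needs two toric inputs.
\begin{enumerate}
\item $u'\cup u'\equiv 0\pmod 2$ for toric orbifolds with $g$ even, which the paper imports from the literature. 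This makes $a_{11}$ even, so the twisted $\varphi_g$ is still a ring isomorphism.
\item Lemma~\ref{lemma_Pontryagin of torsion element in X}, obtained from the cofibration $L\to\bigvee S^2_i\to X$ and the Postnikov square of the lens space. It provides a Bockstein-nontrivial class with $\mathcal{P}_1=2$.
\end{enumerate}
When $\mathcal{P}_1(w')=0$, that class must be $w'+u'$. Since $\mathcal{P}_1(w'+u')-\mathcal{P}_1(w')\equiv a_{11}+2b_1\pmod 4$, this means $\frac{a_{11}}{2}+b_1$ is odd. Because $\frac g2$ is odd, the twist then shifts $c'$ by exactly $g$ and kills the obstruction.
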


The restriction $n=1$ in Part~\ref{thm_main_part a} of Theorem~\ref{thm_main} can be dropped, so that $\nu_2(\tor)=2$ is enough to imply that $X,X'$ are homotopy equivalent if and only if they are properly isomorphic. The details of this stronger claim will be addressed in~\cite{FSS2} (see Remark~\ref{remark_extend main proof}).


\section{Pontryagin squares and Postnikov squares}

Throughout the paper we adopt the following notation. For $t\in\N\cup\{\infty\}$, let $\Z/t$ denote the cyclic group of order $t$. 
In particular, $\mathbb{Z}/1$ is the trivial group and $\mathbb{Z}/\infty=\mathbb{Z}$.
As we will primarily consider cohomology with coefficients in $\Z/2^r$, we let
\begin{itemize}
\item
$\imath^t_s\colon H^*(-;\Z/2^t)\to H^*(-;\Z/2^s)$ for $t\leq s$ be the homomorphism induced by inclusion $\mathbb{Z}/2^t\subseteq\mathbb{Z}/2^s$,
\item
$\rho^t_s\colon H^*(-;\Z/2^t)\to H^*(-;\Z/2^s)$ for $t\geq s$ or $t=\infty$ be the reduction homomorphism,
\item
$\beta^s_t\colon H^*(-;\Z/2^s)\to H^{*+1}(-;\Z/2^t)$ for $s\in\mathbb{N}$ and $t\in\mathbb{N}\cup\infty$ be the Bockstein homomorphism associated with the exact sequence $0\to\Z/2^t\to\Z/2^{t+s}\to\Z/2^s\to0$,
\item
$\beta_t\colon H^*(-;\Z/2^t)\to H^{*+1}(-;\Z/2^t)$ be the Bockstein homomorphism $\beta^t_t$.
\end{itemize}

For an integer $\tor\geq1$, the \emph{Moore space} $P^3(\tor)$ is the mapping cone of the degree $\tor$ map $S^2\to S^2$. By Whitehead's certain exact sequence~\cite[Theorem 2.1.22]{Baues:1996} we know that
\[
\pi_3(P^3(\tor))\cong\begin{cases}
\Z/\tor		&\text{for $\tor$ odd}\\
\Z/2\tor	&\text{for $\tor$ even},
\end{cases}
\]
and is generated by the composite of Hopf map $\eta\colon S^3\to S^2$ and the inclusion $\imath\colon S^2\hookrightarrow P^3(\tor)$. For convenience we write $\bar{\eta}=\imath\circ\eta$.

Elements of $\pi_3(P^3(\tor))$ can be detected by the \emph{Pontryagin squares}, which are cohomology operations
\[
\mathcal{P}_r\colon H^{2}(-;\mathbb{Z}/2^r)\rightarrow H^{4}(-;\mathbb{Z}/2^{r+1}),\qquad r\geq1
\]
introduced by Pontryagin in 1942~\cite{Pontrjagin:1942} and later given a careful definition by Whitehead~\cite{Wh}, who used them to classify simply connected, $4$-dimensional polyhedra up to homotopy. Analogous squaring operations for odd-primary coefficients were introduced by Whitehead in a subsequent paper~\cite{Wh1}.

The Pontryagin squares enjoy the following salient properties, proofs for which we refer the reader to~\cite{Wh,Wh1}, and ~\cite{BrowderThomas:1962}, where the operations were characterised axiomatically. 

\begin{lemma}\label{lemma_Pontryagin properties}
Let $X$ be a space and let $r$ be a positive integer. Then
\begin{enumerate}[label=(\alph*)]
\item $\rho_{r}^{r+1}(\mathcal{P}_r(x))=x^2$
\item\label{lemma_P properties reduct n sq} $\mathcal{P}_r(\rho_{r}^{r+1}(z))=z^2$
\item\label{lemma_P properties almost linear} $\mathcal{P}_r(x+y)=\mathcal{P}_r(x)+\mathcal{P}_r(y)+\imath^{r}_{r+1}(x\cup y)$
\item\label{lemma_P properties const square}
$\mathcal{P}_r(a x)=a^2\mathcal{P}_r(x)$ for any $a\in\mathbb{Z}/2^r$
\item\label{lemma_P properties refines square}
$2\mathcal{P}_r(x)=\imath^{r}_{r+1}(x^2)$,
\end{enumerate}
where $x,y\in H^2(X;\Z/2^r)$ and $z\in H^2(X;\Z/2^{r+1})$.
\end{lemma}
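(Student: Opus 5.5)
The paper refers to \cite{Wh,Wh1,BrowderThomas:1962} for proofs of these properties. The plan is to derive all five from a single construction of $\mathcal{P}_r$ at the cochain level, via the cup-$1$ product.

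Take a cocycle $u\in C^2(X;\Z)$ whose reduction represents $x$. Then $\delta u=2^r c$ for some integral cochain $c$. Define
\[
\mathcal{P}_r(x)=[\,u\cup u+u\cup_1\delta u\,]\ \bmod 2^{r+1}.
\]
The first step is to check that this is a cocycle mod $2^{r+1}$ and is independent of the choices. Steenrod's coboundary formula for $\cup_1$ gives
\[
\delta(u\cup_1 u)=\delta u\cup_1 u\pm u\cup_1\delta u \pm u\cup u\pm u\cup u,
\]
so $\delta(u\cup u+u\cup_1\delta u)$ is a sum of terms each divisible by $2^{r+1}$. Every such term either contains two factors of $\delta u$, or contains the symmetric combination $2^r(c\cup u+u\cup c)$, which is $2^{r+1}$ times something modulo a coboundary. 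Changing $u$ to $u+2^r w+\delta v$ alters the expression by a coboundary plus multiples of $2^{r+1}$. This independence check is the main obstacle, and it needs careful sign bookkeeping: it only works because in degree $2$ the relevant signs make the two $\cup_1$ corrections add rather than cancel. An alternative that avoids cochains is to construct $\mathcal{P}_r$ as a map $K(\Z/2^r,2)\to K(\Z/2^{r+1},4)$, using the fact that $H^4(K(\Z/2^r,2);\Z/2^{r+1})\cong\Z/2^{r+1}$ is generated by such a class. However, pinning down its properties that way requires knowing the cohomology of $K(\Z/2^r,2)$.

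Given the cochain formula, the properties follow directly.
\begin{enumerate}[label=(\alph*)]
\item Reducing mod $2^r$ kills $u\cup_1\delta u$, since $\delta u\equiv 0 \bmod 2^r$. What remains is $u\cup u$, which represents $x^2$.
\item If $x=\rho(z)$, lift $z$ itself mod $2^{r+1}$, so that $\delta u\equiv 0 \bmod 2^{r+1}$. The formula then yields $z^2$.
\item Expand $(u+v)\cup(u+v)$ and compare with $u\cup v$. The term $v\cup u$ differs from $u\cup v$ by $\delta(u\cup_1 v)$ plus the $\cup_1$ terms with $\delta u$ and $\delta v$. After cancelling against the $\cup_1\delta$ corrections, the cross term becomes $2(u\cup v)$ mod $2^{r+1}$, i.e.\ $\imath^r_{r+1}(x\cup y)$. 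Here $\imath$ is multiplication by $2$ on coefficients.
\item Take the lift $au$ for $ax$. Both summands of the formula are quadratic in $u$, so the result is $a^2\mathcal{P}_r(x)$. This is well defined because $a$ is only determined mod $2^r$, and $a^2$ is therefore determined mod $2^{r+1}$.
\item Apply (c) with $y=x$ to get $\mathcal{P}_r(2x)=2\mathcal{P}_r(x)+\imath(x^2)$. Apply (d) to get $\mathcal{P}_r(2x)=4\mathcal{P}_r(x)$. Subtracting, $2\mathcal{P}_r(x)=\imath(x^2)$ up to sign. This sign is irrelevant because $\imath(x^2)$ has order dividing $2$.
\end{enumerate}
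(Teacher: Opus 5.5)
The paper gives no proof of this lemma; it defers to Whitehead and Browder--Thomas. Your cochain formula $u\cup u+u\cup_1\delta u$ is Whitehead's own definition, so yours is the natural self-contained route. Parts (a), (b), (d) and the independence of the lift go through as you describe; the last needs only the coboundary formula for $\cup_1$.

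\textbf{The gap in (c).} Write $\delta u=2^rc$ and $\delta v=2^rd$ for integral lifts $u,v$ of $x,y$. With the standard signs, $\delta(u\cup_1v)=-u\cup v+v\cup u+\delta u\cup_1v+u\cup_1\delta v$ for $2$-cochains, so the cross terms are
\[
u\cup v+v\cup u+u\cup_1\delta v+v\cup_1\delta u=2\,u\cup v+\delta(u\cup_1v)+2^r\bigl(v\cup_1c-c\cup_1v\bigr).
\]
The last bracket is not cancelled by the $\cup_1\delta$ corrections, as you claim. The corrections present are $u\cup_1\delta v+v\cup_1\delta u$, while $\delta(u\cup_1v)$ produces $\delta u\cup_1v+u\cup_1\delta v$. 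Since $\cup_1$ is not commutative on cochains, $v\cup_1\delta u-\delta u\cup_1v$ survives.

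Removing it needs the next Steenrod product $\cup_2$:
\begin{itemize}
\item $\delta c=0$, because $2^r\delta c=\delta\delta u=0$ and integral cochains are torsion-free.
\item $\delta v=2^rd\equiv0\pmod 2$.
\item So the coboundary formula for $\cup_2$ gives $\delta(v\cup_2c)\equiv v\cup_1c+c\cup_1v\pmod 2$.
\item Hence $2^r(v\cup_1c-c\cup_1v)\equiv\delta(2^r\,v\cup_2c)\pmod{2^{r+1}}$.
\end{itemize}
Only after this step does the cross term represent $\imath^r_{r+1}(x\cup y)$. So (c) needs an ingredient beyond $\cup_1$ that your sketch omits.

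\textbf{Cocycle check.} The formula you need is the one for $\delta(u\cup_1\delta u)$, not $\delta(u\cup_1u)$. The outcome is exact divisibility:
\[
\delta(u\cup u+u\cup_1\delta u)=2\,u\cup\delta u+\delta u\cup_1\delta u=2^{r+1}u\cup c+2^{2r}c\cup_1c.
\]
An argument ``modulo a coboundary'' could not establish a cocycle condition, so your stated mechanism does not work as written. Your worry about signs is also unnecessary: $2\,u\cup_1\delta u=2^{r+1}u\cup_1c\equiv0\pmod{2^{r+1}}$, so the sign of the $\cup_1$ term does not matter.

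\textbf{Part (e).} There is no sign ambiguity. Parts (c) and (d) give $4\mathcal{P}_r(x)=2\mathcal{P}_r(x)+\imath^r_{r+1}(x^2)$, which is exactly the claim. More directly, $2(u\cup u+u\cup_1\delta u)\equiv2\,u\cup u\pmod{2^{r+1}}$, and this represents $\imath^r_{r+1}(x^2)$. Your justification that $\imath^r_{r+1}(x^2)$ has order dividing $2$ is false for $r\geq2$: for $X=\C P^2$ it has order $2^r$.
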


The following lemma, which will be used in a later section, describes how the Pontryagin squares and the modular Hopf invariant detect elements in $\pi_3(P^3(\tor))$.

\begin{lemma}\label{lemma_mapping cone c gamma}
Let $\tor$ be a positive even integer with $\nu_2(\tor)=2^r$.
For an integer $c\geq1$, let $C_{c\bar{\eta}}$ be the mapping cone of $c\bar{\eta}\colon S^3\to P^3(\tor)$. If $w\in H^2(C_{c\bar{\eta}};\Z/\tor)\cong\Z/\tor$ and $v\in H^4(C_{c\bar{\eta}};\Z)\cong\Z$ are generators, then there exists a unit $\epsilon\in\Z/2\tor$ such that
\[
w\cup w\equiv\epsilon cv\pmod{\tor}
\quad\text{and}\quad
\mathcal{P}_r(w)\equiv\epsilon cv\pmod{2^{r+1}}.
\]
\end{lemma}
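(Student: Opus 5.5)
The plan is to compute both cup products and the Pontryagin square in the cell structure $C_{c\bar\eta}=P^3(\tor)\cup_{c\bar\eta}e^4$, reducing to the universal case $c=1$ and then to a comparison with $\C P^2$ via the map induced by $\imath$. First, the cohomology. The cellular chain complex has one cell in each of dimensions $0,2,3,4$, and its only nonzero boundary is multiplication by $\tor$ from the $3$-cell to the $2$-cell; the attaching map $c\bar\eta$ is stably trivial, so it does not hit the $3$-cell. Hence $H^2(C;\Z/\tor)\cong\Z/\tor$ and $H^4(C;\Z)\cong\Z$, as the statement asserts. Moreover, for every $2^s$ dividing $2\tor$ the reduction $H^4(C;\Z)\to H^4(C;\Z/2^s)$ is onto, with $H^4(C;\Z/2^s)\cong\Z/2^s$ generated by the image of $v$. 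In particular $\mathcal P_r(w)$ lies in $H^4(C;\Z/2^{r+1})\cong\Z/2^{r+1}$, and it makes sense to compare it with $cv$ modulo $2^{r+1}$.

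Next, I would use naturality to pass from $c\bar\eta$ to $\bar\eta$. Since $c\bar\eta=\bar\eta\circ c$ with $c\colon S^3\to S^3$ of degree $c$, there is a map of cofibres $f\colon C_{c\bar\eta}\to C_{\bar\eta}$. This map is the identity on $P^3(\tor)$ and has degree $c$ on the top cell. Consequently $f^*$ is an isomorphism on $H^2(-;\Z/\tor)$ and sends $v'\mapsto cv$ on $H^4(-;\Z)$. By naturality of cup products and of $\mathcal P_r$, it therefore suffices to find a unit $\epsilon$ with $w'\cup w'\equiv\epsilon v'\pmod{\tor}$ and $\mathcal P_r(w')\equiv\epsilon v'\pmod{2^{r+1}}$ in $C_{\bar\eta}$.

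For the case $c=1$, consider the inclusion $j\colon \C P^2=S^2\cup_\eta e^4\to C_{\bar\eta}$ induced by $\imath$. On $H^4$ this map is an isomorphism, and on $H^2(-;\Z/2^s)$ it sends $w'$ (or its reduction) to a generator $\rho(x)$, with $x\in H^2(\C P^2;\Z)$. We can only use $H^2(C_{\bar\eta};\Z/\tor)\to H^2(\C P^2;\Z/\tor)$, which is surjective: it is the restriction along $S^2\subset P^3(\tor)$, and that restriction is an isomorphism mod $\tor$. In $\C P^2$ we have $x^2$ equal to the generator, so $j^*(w'\cup w')=\rho(x)^2=\rho(x^2)$. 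Since $j^*$ is an isomorphism on $H^4(-;\Z/\tor)$, this gives $w'\cup w'=\pm v'$ mod $\tor$, after fixing the sign of the generators. For the Pontryagin square, write $w'_r=\rho(w')\in H^2(C;\Z/2^r)$. Then $j^*\mathcal P_r(w'_r)=\mathcal P_r(\rho^\infty_r x)=\rho^\infty_{r+1}(x^2)$ by Lemma~\ref{lemma_Pontryagin properties}\ref{lemma_P properties reduct n sq}, applied after reducing from integral classes. So $\mathcal P_r(w')\equiv\pm v'\pmod{2^{r+1}}$ with the same sign.

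Finally, we must deal with the choice of generators. Replacing $w$ by $uw$ for a unit $u\in\Z/\tor$ multiplies $w\cup w$ by $u^2$. By Lemma~\ref{lemma_Pontryagin properties}\ref{lemma_P properties const square}, it also multiplies $\mathcal P_r(w)$ by $u^2$ mod $2^{r+1}$, and $u^2$ mod $2^{r+1}$ is well defined from $u$ mod $2^r$ because $(u+2^r k)^2\equiv u^2\pmod{2^{r+1}}$. I would lift $u^2$ to a unit $\epsilon\in\Z/2\tor$ compatible with both congruences. This is possible by the Chinese remainder theorem, since $\tor$ and $2^{r+1}$ have least common multiple $2\tor$. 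Changing the sign of $v$ is absorbed into $\epsilon$ in the same way. The main obstacle is bookkeeping: we must ensure that $\mathcal P_r(w)$ is defined using the mod $2^r$ reduction of $w$, and that the two congruences are simultaneously realized by a single unit of $\Z/2\tor$. Both follow from the CRT remark above, together with the fact that all classes come from the integral class $x$ on $\C P^2$.
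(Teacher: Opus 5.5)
Your argument is correct and is essentially the paper's proof: both pull back along the map of cofibres induced by the bottom-cell inclusion $S^2\hookrightarrow P^3(\tor)$, where $w$ comes from an integral class, and then apply Lemma~\ref{lemma_Pontryagin properties}\ref{lemma_P properties reduct n sq}. The only difference is that the paper compares directly with $C_{c\eta}$, using $x\cup x=cy$ there, whereas you first reduce to $c=1$ via the degree-$c$ map and then compare with $\C P^2$. One slip, which does not affect the argument: for even $\tor$ and odd $c$ the map $c\bar\eta$ is \emph{not} stably trivial, since $\imath\eta$ survives stabilization; the $4$-cell still has zero cellular boundary, but the reason is that $c\bar\eta$ factors through the $2$-skeleton $S^2$.
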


\begin{proof}
Consider the homotopy commutative diagram
\[
\xymatrix{
S^3\ar[r]^-{c\eta}\ar[d]^-{=}	&S^2\ar[r]\ar[d]^-{\text{incl}}		&C_{c\eta}\ar[d]^-{\tilde{\imath}}\\
S^3\ar[r]^-{c\bar{\eta}}			&P^3(\tor)\ar[r]\ar[d]^-{\text{pinch}}	&C_{c\bar{\eta}}\ar[d]\\
								&S^3\ar[r]^-{=}						&S^3
}
\]
where $C_{c\eta}$ is the mapping cone of $c\eta$ and $\tilde{\imath}$ is an induced map.
It's easy to see that there exist generators $x\in H^2(C_{c\eta};\Z)$ and $y\in H^4(C_{c\eta};\Z)$ such that
\[
x\cup x=cy.
\]
Since $\tilde{\imath}^*\colon H^2(C_{c\bar{\eta}};\Z/\tor)\to H^2(C_{c\eta};\Z/\tor)$ and $\tilde{\imath}^*\colon H^4(C_{c\bar{\eta}};\Z)\to H^4(C_{c\eta};\Z)$ are isomorphisms,
\[
\tilde{\imath}^*(w)\equiv\epsilon'x\pmod{\tor}
\quad\text{and}\quad
\tilde{\imath}^*(v)=\epsilon''y,
\]
where $\epsilon''\in\{1,-1\}$ and $\epsilon'$ is an integer coprime to $\tor$.
Then $\epsilon=\epsilon''(\epsilon')^2$ is a unit mod $2g$ and the
naturality of cup products gives
\[
\tilde{\imath^*}(w\cup w)\equiv (\epsilon')^2x\cup x\equiv (\epsilon')^2cy\equiv\epsilon c\tilde{\imath}^*(v)\pmod{\tor},
\]
implying that $w\cup w\equiv\epsilon cv\pmod{\tor}$.
Similarly, the naturality of the Pontryagin squares together with Lemma~\ref{lemma_Pontryagin properties}~\ref{lemma_P properties reduct n sq} gives
\[
\tilde{\imath}^*\circ\mathcal{P}_r(w)\equiv\mathcal{P}_r(\epsilon'x)\equiv(\epsilon')^2x\cup x\equiv(\epsilon')^2cy\equiv\epsilon c\tilde{\imath}^*(v)\pmod{2^{r+1}},
\]
implying that $\mathcal{P}_r(w)\equiv\epsilon cv\pmod{2^{r+1}}$.
\end{proof}

The Pontryagin squares are related to another family of cohomology operations, namely the \emph{Postnikov squares}~\cite{Postnikov:1949,Whitehead:1951}, which are the cohomology operations
\[
\mathcal{P}'_r\colon H^1(X;\mathbb{Z}/2^r)\rightarrow H^3(X;\mathbb{Z}/2^{r+1}),\qquad x\mapsto2^{r-1}\iota_{r+1}^{r}(x\cup\beta_{r}x).
\]
In~\cite[Theorem 6.1]{Thomas:1959} the Postnikov squares are shown to enjoy the following properties.
\begin{lemma}\label{lemma_Postnikov properties}
Let $X$ be a space and let $r$ be a positive integer. Then
\begin{enumerate}[label=(\alph*)]
\item\label{lemma_Postnikov properties_homomorphism} $\mathcal{P}'_r\colon H^1(X;\mathbb{Z}/2^r)\rightarrow H^3(X;\mathbb{Z}/2^{r+1})$ is a homomorphism.
\item\label{lemma_Postnikov properties_order 2} $2\cdot\mathcal{P}'_r=0$.
\item\label{lemma_Postnikov properties_commute with suspension}
There is a commutative diagram
\[
\xymatrix{
H^1(X;\Z/2^r)\ar[r]^-{\sigma}_-\cong\ar[d]^-{\mathcal{P}'_r}&H^2(\Sigma X;\Z/2^r)\ar[d]^-{\mathcal{P}_r}\\
H^3(X;\Z/2^{r+1})\ar[r]^-{\sigma}_-\cong					&H^4(\Sigma X;\Z/2^{r+1})
}
\]
where $\sigma$ is the suspension isomorphism.
\item\label{lemma_Postnikov properties_commute with connecting maps}
Given a homotopy cofibration sequence $X\rightarrow Y\rightarrow Z$, the following square commutes
\begin{equation}\label{dgrm_Postnikov Pontryagin commute}
\begin{gathered}
\xymatrix{
H^1(X;\Z/2^r)\ar[r]^-{\delta}\ar[d]^-{\mathcal{P}'_r}	&H^2(Z;\Z/2^r)\ar[d]^-{\mathcal{P}_r}\\
H^3(X;\Z/2^{r+1})\ar[r]^-{\delta}						&H^4(Z;\Z/2^{r+1})
}
\end{gathered}
\end{equation}
where $\delta$ is the connecting homomorphism in the long exact cohomology sequence.
\end{enumerate}
\end{lemma}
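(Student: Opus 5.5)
The plan is to verify the four properties directly from the defining formula $\mathcal{P}'_r(x)=2^{r-1}\imath^r_{r+1}(x\cup\beta_r x)$. Two elementary observations about coefficients will be used throughout. First, $\imath^r_{r+1}$ is induced by $1\mapsto 2$, so $\imath^r_{r+1}\circ\rho^\infty_r=2\rho^\infty_{r+1}$ on integral classes. Second, $\beta_r=\rho^\infty_r\circ\beta^r_\infty$, where $\beta^r_\infty$ is the integral Bockstein, whose image is $2^r$-torsion. The two easy parts are (b) and (a). For (b) we compute $2\mathcal{P}'_r(x)=2^r\imath^r_{r+1}(x\cup\beta_r x)=\imath^r_{r+1}(2^r(x\cup\beta_rx))=0$, because the class $x\cup\beta_r x$ has $\Z/2^r$ coefficients.

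For (a) we expand $(x+y)\cup\beta_r(x+y)$. The cross terms are $x\cup\beta_r y+y\cup\beta_r x$. By graded commutativity ($|y|=1$, $|\beta_r x|=2$) we have $y\cup\beta_r x=\beta_r x\cup y$. By the derivation property $\beta_r(x\cup y)=\beta_r x\cup y-x\cup\beta_r y$, so the cross terms equal $\beta_r(x\cup y)+2\,x\cup\beta_r y$. After applying $2^{r-1}\imath^r_{r+1}$:
\begin{itemize}
\item the second term becomes $\imath^r_{r+1}(2^r\,x\cup\beta_r y)=0$;
\item the first term is $2^{r-1}\imath^r_{r+1}\rho^\infty_r(B)=2^r\rho^\infty_{r+1}(B)$ with $B=\beta^r_\infty(x\cup y)$. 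Since $2^rB=0$ integrally, this also vanishes.
\end{itemize}
Hence $\mathcal{P}'_r(x+y)=\mathcal{P}'_r(x)+\mathcal{P}'_r(y)$.

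Part (d) will be deduced from (c). For a cofibration $X\to Y\to Z$, the connecting map $\delta\colon H^1(X)\to H^2(Z)$ equals $\pm q^*\circ\sigma$, where $q\colon Z\to\Sigma X$ is the coboundary map of the Puppe sequence. The operation $\mathcal{P}_r$ is natural, and by Lemma~\ref{lemma_Pontryagin properties}~\ref{lemma_P properties const square} it satisfies $\mathcal{P}_r(-u)=\mathcal{P}_r(u)$, so the sign is irrelevant. The degree-$3$ connecting map factors in the same way, and therefore (d) follows from (c) together with naturality of $\mathcal{P}_r$ under $q^*$.

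The main obstacle is (c), which compares a genuinely unstable operation with a cup-product formula. I would use Whitehead's cochain-level formula: lift $u\in H^2(-;\Z/2^r)$ to an integral cochain $\tilde u$, and set $\mathcal{P}_r(u)=[\tilde u\cup\tilde u+\tilde u\cup_1\delta\tilde u] \bmod 2^{r+1}$. Take $u=\sigma x$, and choose $\tilde u=\sigma\tilde x$ for an integral cochain $\tilde x$ lifting $x$. On a suspension the cup product of two suspended cochains vanishes at cochain level, for example using the reduced cone/join model. The remaining term uses the standard identity $\sigma a\cup_1\sigma b=\pm\sigma(a\cup b)$, which I would have to check carefully with an explicit model of $\Sigma X$; getting this identity and its sign right is where the work lies. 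Write $\delta\tilde x=2^r c$, where $c$ is an integral cocycle representing $\beta^r_\infty x$, whose reduction represents $\beta_r x$. Then
\[
\mathcal{P}_r(\sigma x)=\pm\sigma\bigl[2^r\,\tilde x\cup c\bigr]\bmod 2^{r+1}=\sigma\bigl(2^{r-1}\imath^r_{r+1}(x\cup\beta_r x)\bigr).
\]
The sign disappears because $2^r\equiv-2^r\pmod{2^{r+1}}$. As a fallback, one can instead argue on the universal example $\Sigma K(\Z/2^r,1)$: compute $H^4(\Sigma K(\Z/2^r,1);\Z/2^{r+1})\cong H^3(K(\Z/2^r,1);\Z/2^{r+1})$ directly. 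Then identify $\mathcal{P}_r(\sigma\iota)$ by testing it on the Moore space $P^3(2^r)=\Sigma P^2(2^r)$, where the nontriviality of $\mathcal{P}_r$ is detected via Lemma~\ref{lemma_mapping cone c gamma}.
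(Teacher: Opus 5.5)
Your proposal is correct, but it takes a different route from the paper.

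\textbf{The paper's route.} The paper proves (c) first, on the universal example $E\colon\Sigma K(\Z/2^r,1)\to K(\Z/2^r,2)$. It uses three ingredients: the cofibre sequence $\Sigma K_1\wedge K_1\to\Sigma K_1\to P$ for the projective plane of $K_2$, the $5$-connectivity of $P\to K_2$, and Thomas's formula $\lambda(v_1\otimes v_1)=v_2^2$. From these it shows that $E^*\mathcal{P}_r(u_2)$ is a nonzero element of order $2$ in the cyclic group $H^3(K_1;\Z/2^{r+1})$, hence equals $\mathcal{P}'_r(u_1)$. It then reads off (a) and (b) from (c), using that cup products vanish on a suspension so $\mathcal{P}_r$ is additive there. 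It gets (d) from (c) exactly as you do.

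\textbf{Your route.} You prove (a) and (b) directly from the defining formula, via the derivation property of $\beta_r$ and $2^r\beta^r_\infty=0$. This is elementary and independent of (c). You prove (c) by a cochain computation with $\tilde u\cup\tilde u+\tilde u\cup_1\delta\tilde u$, and that computation does go through:
\begin{itemize}
\item In the simplicial join model $\{a,b\}*K$, with $\sigma c$ supported on the cone over $a$, cup products of suspended cochains vanish identically.
\item In the $\cup_1$ formula, only the term in which the face evaluated by $\sigma\tilde y$ contains $a$ survives. This gives $\sigma\tilde x\cup_1\sigma\tilde y=\pm\sigma(\tilde x\cup\tilde y)$ when $a$ is ordered last. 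When $a$ is ordered first you get $\pm\sigma(\tilde y\cup\tilde x)$ instead, which is harmless because $\beta_r x\cup x=x\cup\beta_r x$.
\item Every sign ambiguity, including the sign convention on the $\cup_1$ term itself, multiplies a term of the form $2^r$ times an integral cochain. So all of them die modulo $2^{r+1}$.
\end{itemize}

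\textbf{Comparison.} Your argument is self-contained, with no appeal to Harper's connectivity result or Thomas's theorem, and it yields the formula directly. The cost is committing to a cochain-level definition of $\mathcal{P}_r$ and an explicit cochain model for $\sigma$, with general $X$ reduced to simplicial complexes by naturality. The paper's argument avoids cochain conventions entirely, but it rests on those external results and on the cyclicity of $H^3(K_1;\Z/2^{r+1})$.

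\textbf{Two remarks.}
\begin{itemize}
\item In (d), any sign in the degree-$3$ connecting map is absorbed by (b), since $\mathcal{P}'_r(x)$ has order at most $2$.
\item Discard your fallback for (c). $P^3(2^r)=\Sigma P^2(2^r)$ is $3$-dimensional, so $\mathcal{P}_r$ vanishes on it and detects nothing. The natural test space would be the $4$-skeleton $\Sigma L^3$ of $\Sigma K_1$. But determining its top-cell attaching map in $\pi_3(P^3(2^r))$, in order to apply Lemma~\ref{lemma_mapping cone c gamma}, is equivalent to the statement you are trying to prove.
\end{itemize}
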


\begin{proof}
The proof in~\cite{Thomas:1959}, although complete, is complicated somewhat by the level of generality it attempts. Moreover, while some details are given in~\cite{Whitehead:1951}, the important Statement~\ref{lemma_Postnikov properties_commute with suspension} is not spelled out there explicitly. On the other hand, the discussion of Statement~\ref{lemma_Postnikov properties_commute with suspension} in~\cite[\S4]{BrowderThomas:1962} seems to contain some errors. Thus we will provide full details. 

Firstly, notice that~\ref{lemma_Postnikov properties_homomorphism} follows from~\ref{lemma_Postnikov properties_commute with suspension}, as does~\ref{lemma_Postnikov properties_order 2}, since parts $(c), (d)$ of Lemma~\ref{lemma_Pontryagin properties} imply that $2\cdot\mathcal{P}_r(\sigma x)=4\cdot\mathcal{P}_r(\sigma x)$, hence that $2\cdot\mathcal{P}_r(\sigma x)=0$. Furthermore, Statements~\ref{lemma_Postnikov properties_commute with suspension} and~\ref{lemma_Postnikov properties_commute with connecting maps} are actually equivalent. For applying~\ref{lemma_Postnikov properties_commute with connecting maps} to the homotopy cofibration sequence $X\rightarrow\ast\rightarrow\Sigma X$ clearly returns~\ref{lemma_Postnikov properties_commute with suspension}. Conversely, the connecting homomorphism $\delta$ in Diagram~\eqref{dgrm_Postnikov Pontryagin commute} agrees up to natural isomorphism with the composite $H^*(X;\mathbb{Z}/2^r)\xrightarrow{\sigma}H^{*+1}(\Sigma X;\mathbb{Z}/2^r)\rightarrow H^{*+1}(Z;\mathbb{Z}/2^r)$, where the second arrow is induced by a map of spaces $Z\rightarrow\Sigma X$ extending the homotopy cofibration sequence. Evidently, if~\ref{lemma_Postnikov properties_commute with suspension} holds, then the naturality of the Pontryagin squares implies that so does~\ref{lemma_Postnikov properties_commute with connecting maps}.

Thus it remains to prove~\ref{lemma_Postnikov properties_commute with suspension}, and for this it suffices to study the universal example. Thus let $K_i=K(\mathbb{Z}/2^r,i)$ for $i=1,2$ be the Eilenberg-Mac Lane space of type $(\mathbb{Z}/2^r,i)$ and let $u_i\in H^i(K_i;\mathbb{Z}/2^r)$ be its fundamental class. Then $\sigma(u_1)\in H^2(\Sigma K_1;\mathbb{Z}/2^r)$ is represented by a map $E\colon \Sigma K_1\rightarrow K_2$, that is, $E^*u_2=\sigma(u_1)$. We will evaluate $\mathcal{P}_r(\sigma u_1)=E^*\mathcal{P}_r(u_2)$.

Let $\phi\colon\Sigma K_1\wedge K_1\rightarrow\Sigma K_2$ be the Hopf construction on the loop multiplication of $K_1$ and let $P$ be the homotopy cofibre of $\phi$. Defined as such, $P$ is the projective plane of $K_2$ and there is a homotopy cofibre sequence
\begin{equation}\label{projplancofseq}
\Sigma K_1\wedge K_1\xrightarrow{\phi}\Sigma K_1\rightarrow P.
\end{equation}
 Since $\Sigma K_1\wedge K_1$ is $2$-connected, $E\circ\phi\simeq\ast$. Thus there is an induced map $\xi\colon P\rightarrow K_2$ extending $E$. The map $\xi$ is unique up to homotopy and by \cite[Proposition 3.5.1.(e)]{Harper:2002} it is $5$-connected.

Now let $H^*(-)$ denote cohomology with mod $2^{r+1}$ coefficients and consider the long exact cohomology sequence of~\eqref{projplancofseq}. Using $\xi$ to identify the low-dimensional cohomologies of $K_2$ and $P$ this becomes the exact sequence
\begin{equation}\label{extseqprojplan}
\cdots\rightarrow H^3(K_2)\xrightarrow{E^*}H^3(\Sigma K_1)\rightarrow H^1(K_1)\otimes H^1(K_1)\xrightarrow{\lambda}H^4(K_2)\xrightarrow{E^*}H^4(\Sigma K_1)\rightarrow H^4(\Sigma K_1\wedge K_1)
\end{equation}
where we identify $H^3(\Sigma K_1\wedge K_1)\cong H^1(K_1)\otimes H^1(K_1)$ via the Künneth Theorem and $\lambda$ is the composite of this identification and the connecting map of the sequence. The map $E$ is $3$-connected, so the first $E^*$ is injective. But $H_3(K_2;\mathbb{Z})$ and $H_2(K_1;\mathbb{Z})$ vanish by the Hurewicz Theorem, so the Universal Coefficient Theorem shows that both $H^3(K_2)$ and $H^3(\Sigma K_1)$ are isomorphic to $\mathbb{Z}/2^r$, and this implies that this $E^*$ must be isomorphic. Thus $\lambda$ is injective.

Now, $v_i=\imath_{r+1}^{r}(u_i)$ generates $H^i(K_i)$ for $i=1,2$, and $v_1\otimes v_1$ generates $H^1(K_1)\otimes H^1(K_1)\cong\Z/2^r$. By~\cite[Theorem 2.4]{Thomas:1961}, $\lambda$ satisfies $\lambda(v_1\otimes v_1)=v_2^2$. Since $2\cdot\mathcal{P}_r(u_2)=v_2^2$ by Lemma~\ref{lemma_Pontryagin properties}~\ref{lemma_P properties refines square}, we obtain
\[
\lambda(v_1\otimes v_1)=2\cdot\mathcal{P}_r(u_2).
\]
From the exactness of~\eqref{extseqprojplan} it follows that $E^*\mathcal{P}_r(u_2)$ is nontrivial and has order $2$ in $H^4(\Sigma K_1)\cong H^3(K_1)$. But $H^3(K_1;\mathbb{Z}/2^r)\cong\mathbb{Z}/2^r$ is generated by $u_1\cup\beta_r u_1$ by~\cite[Example 3E.2]{Hat} and its easy to see that $\imath_{r+1}^{r}\colon H^3(K_1;\mathbb{Z}/2^r)\rightarrow H^3(K_1;\mathbb{Z}/2^{r+1})=H^3(K_1)$ is isomorphic. Therefore $\mathcal{P}'_r(u_1)=2^{r-1}\cdot\imath_{r+1}^{r}(u_1\cup\beta_r u_1)$ is the unique element of order $2$ in $H^3(K_1)$. Thus
\[
\mathcal{P}_r(\sigma u_1)=E^*\mathcal{P}_r(u_2)=\sigma\mathcal{P}'_r(u_1),
\]
which is what was needed to be shown.
\end{proof}

We close this section by discussing the action of the Postnikov operations on three dimensional lens space.
Given coprime integers $a$ and $b$, the lens space $L(b;a)$ is defined as
\[
L(b;a)=S^3/[(z_1,z_2)\sim(\omega z_1,\omega^{a} z_2)]
\]
where $S^3$ is regarded as the unit sphere in $\R^4\cong\C^2$ and $\omega=\exp(\frac{2\pi\sqrt{-1}}{b})$ is a $b$\textsuperscript{th} root of unity. We denote $L(b;a)$ by $L$ if there is no confusion.
Suppose $\nu_2(b)=2^r$.
Then for any positive integer $s$ the Universal Coefficient Theorem implies that
\[
H^i(L;\Z/2^s)=\begin{cases}
\Z/2^{\min\{r,s\}}	&i=1,2\\
\Z/2^s				&i=0,3\\
0					&\text{else}.
\end{cases}
\]

\begin{proposition}\label{lemma_Postnikov sq in L}
Let $L=L(b,a)$ be a lens space such that $\nu_2(b)=2^r$ and $r\geq1$.
Then the Postnikov square $\mathcal{P}'_s\colon H^1(L;\Z/2^s)\to H^3(L;\Z/2^{s+1})$ is given by
\[
\mathcal{P}'_s(x_s)\equiv\begin{cases}
2^r\pmod{2^{r+1}}		&\text{if }r=s\\
0						&\text{otherwise}
\end{cases}
\]
where $x_s\in H^1(L;\Z/2^s)$ is a generator.
\end{proposition}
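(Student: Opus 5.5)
Since $H^3(L;\Z/2^{s+1})\cong\Z/2^{s+1}$ and, by Lemma~\ref{lemma_Postnikov properties}~\ref{lemma_Postnikov properties_order 2}, $\mathcal{P}'_s$ takes values in the elements of order at most $2$, the only possible values are $0$ and $2^s\pmod{2^{s+1}}$. The plan is to decide which occurs by computing directly from the defining formula $\mathcal{P}'_s(x)=2^{s-1}\imath^s_{s+1}(x\cup\beta_s x)$ and the known mod $2^k$ cohomology ring of the lens space. Note that the case $r=s$ answer, $2^r\pmod{2^{r+1}}$, is exactly the nonzero element of order $2$.

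\emph{Step 1 (the ring structure).} Use the standard fact (e.g.\ via the cellular structure of $L$ and comparison with $K(\Z/b,1)$, or \cite[Example 3E.2]{Hat}) that for $k\le r$, with $x_k\in H^1(L;\Z/2^k)$ a generator, $\beta_k x_k$ generates $H^2(L;\Z/2^k)\cong\Z/2^k$ and $x_k\cup\beta_k x_k$ generates $H^3(L;\Z/2^k)\cong\Z/2^k$. The odd part of $b$ and the twist $a$ only change generators by units, which is harmless. Indeed, the $3$-skeleton map $L\to K(\Z/b,1)\to K(\Z/2^r,1)$ is an isomorphism on $H^{\le 2}$ and on top cohomology mod $2^k$ up to a unit, so the computation in $K(\Z/2^r,1)$ pulls back. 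The step I expect to be most delicate is this top-degree comparison: I would check it via the degree of the $3$-cell attaching map, which is $b$, so that $H^3$ of $L$ receives $u\cup\beta u$ as a unit multiple of the generator.

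\emph{Step 2 (case $s\le r$).} Here $\beta_s x_s$ is the genuine Bockstein. Then $x_s\cup\beta_s x_s$ is a generator $g_s$ of $H^3(L;\Z/2^s)$, and $\imath^s_{s+1}(g_s)=2\cdot g_{s+1}$, since $\imath$ on $\Z/2^s\to\Z/2^{s+1}$ of top cohomology $\Z$-reduced is multiplication by $2$. Hence $\mathcal{P}'_s(x_s)=2^{s-1}\cdot 2 g_{s+1}=2^s g_{s+1}$. This is $2^s$ if $s=r$.

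If $s<r$, we need more care: $\beta_s x_s$ need not generate. Since $H^1(L;\Z/2^s)=\Z/2^s$ is the reduction of $H^1(L;\Z/2^{s+1})$ when $s<r$, $x_s$ lifts to $\Z/2^{s+1}$ coefficients, and $\beta_s x_s=0$. Hence $\mathcal{P}'_s(x_s)=0$. Alternatively, use Lemma~\ref{lemma_Pontryagin properties}~\ref{lemma_P properties reduct n sq} on the suspension with Lemma~\ref{lemma_Postnikov properties}~\ref{lemma_Postnikov properties_commute with suspension}: $\sigma x_s=\rho(\sigma x_{s+1})$, so $\mathcal{P}_s(\sigma x_s)=(\sigma x_{s+1})^2=0$ in a suspension.

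\emph{Step 3 (case $s>r$).} Here $H^1(L;\Z/2^s)\cong\Z/2^r$ and $x_s=\imath^r_s(x_r)$. By naturality of the operations under the coefficient inclusion (or directly: $\beta_s\imath^r_s x_r$ equals $\imath^r_s\beta_r x_r$ up to the appropriate map), we get $\mathcal{P}'_s(x_s)=2^{s-1}\imath^r_{s+1}(x_r\cup\beta_r x_r)=2^{s-1}\cdot 2^{s+1-r}(\text{unit})$. This is divisible by $2^{s+1}$ since $s-1+s+1-r\ge s+1$ when $s\ge r+1$, so it vanishes.

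Combining the three cases gives the stated formula.
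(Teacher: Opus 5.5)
Your final formula is correct, and your $s=r$ computation is the same as the paper's: $x_r\cup\beta_r x_r$ generates $H^3(L;\Z/2^r)$, and $\imath^r_{r+1}(g_r)=2g_{r+1}$ on top cohomology. However, three intermediate claims are false and need repair.

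(1) Step 1 holds only for $k=r$. For $k<r$ one has $\beta_k x_k=\rho^\infty_k\beta^k_\infty x_k=2^{r-k}t\,y_k$, with $t$ a unit and $y_k$ a generator; this is exactly how the paper computes it. If Step 1 were true for $k<r$, the first paragraph of your Step 2 would give $\mathcal{P}'_s(x_s)=2^s g_{s+1}\neq 0$ for $s<r$, contradicting the proposition.

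(2) Your first argument for $s<r$ uses the wrong Bockstein. $\beta_s=\beta^s_s$ comes from $\Z/2^s\to\Z/2^{2s}\to\Z/2^s$, so $\beta_s x_s=0$ requires a lift of $x_s$ to $\Z/2^{2s}$, not to $\Z/2^{s+1}$. For $b=8$ and $s=2$, $x_2$ lifts to $\Z/8$ but not to $\Z/16$, and $\beta_2x_2=2y_2\neq 0$. What a lift $x_s=\rho^{s+1}_s x_{s+1}$ does give is $\beta_s x_s=2\rho^\infty_s\beta^{s+1}_\infty x_{s+1}$, because $\beta^s_\infty\rho^{s+1}_s=2\beta^{s+1}_\infty$. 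That is still enough: $\mathcal{P}'_s(x_s)=\imath^s_{s+1}\bigl(2^s(\cdots)\bigr)=0$. Your suspension alternative is correct as written, since $\mathcal{P}_s(\sigma x_s)=\mathcal{P}_s(\rho^{s+1}_s\sigma x_{s+1})=(\sigma x_{s+1})^2=0$ and Lemma~\ref{lemma_Postnikov properties}(c) applies.

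(3) In Step 3, the identity $\beta_s\imath^r_s=\imath^r_s\beta_r$ is false. For $b=2$ and $s=2$, the map $\imath^1_2\colon H^2(L;\Z/2)\to H^2(L;\Z/4)$ is zero, while $\beta_2x_2\neq 0$. The correct relation is $\beta_s\imath^r_s=\rho^\infty_s\beta^r_\infty$, from which your formula for $\mathcal{P}'_s(x_s)$ can be recovered. None of this is needed, though. Since $x_s$ has order $2^r\le 2^{s-1}$, you get $2^{s-1}(x_s\cup\beta_s x_s)=(2^{s-1}x_s)\cup\beta_s x_s=0$, which is the paper's one-line argument.

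With these repairs, your proof differs from the paper's only in the case $s<r$. The paper treats all $s\le r$ uniformly. It reduces $x_r$, $\beta_r x_r$ and $x_r\cup\beta_r x_r$ mod $2^s$ and obtains $\mathcal{P}'_s(x_s)=2^r t\,z_{s+1}$, which vanishes for $s<r$ because $H^3(L;\Z/2^{s+1})$ has exponent $2^{s+1}\le 2^r$. Your suspension argument proves more generally that $\mathcal{P}'_s$ kills every class lifting to $\Z/2^{s+1}$ coefficients, without computing $\beta_s x_s$. The cost is that it relies on the nontrivial suspension compatibility in Lemma~\ref{lemma_Postnikov properties}(c), whereas the paper's computation uses only the defining formula.
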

\begin{proof}
First, consider the case where $s> r$.
Since $x_s$ has order $2^r\leq 2^{s-1}$, the cup product $x_s\cup\beta_sx_s$ has order at most $2^{s-1}$. Hence by definition
\[
\mathcal{P}'_s(x_s)=2^{s-1}\iota_{s+1}^{s}(x_s\cup\beta_{s} x_s)=0.
\]

Next, consider the case where $s\leq r$. According to~\cite[Example 3.41]{Hat}
\[
y_r=\beta_{r}(x_r)\in H^2(L;\Z/2^r)
\quad\text{and}\quad
z_r=x_r\cup y_r\in H^3(L;\Z/2^r)
\]
are generators.
Let $x_s\in H^1(L;\Z/2^s)$, $y_s\in H^2(L;\Z/2^s)$, and $z_s\in H^3(L;\Z/2^s)$ be the mod-$2^s$ reductions of $x_r,y_r$ and $z_r$, respectively.
Since $\rho_{s}^{r}\colon H^*(L;\Z/2^r)\to H^*(L;\Z/2^s)$ is surjective, $x_s,y_s,z_s$ generate their respective groups and satisfy
\[
x_s\cup y_s=z_s.
\]
Moreover, $\iota_{s+1}^{s}(z_s)=2z_{s+1}$ for $s\leq r$, where $z_{r+1}$ is a generator of $H^3(L;\Z/2^{r+1})$.

Consider the commutative diagram
\[
\xymatrix{
H^1(L;\Z/2^s)\ar[r]^-{\beta_\infty^{s}}\ar[dr]_-{\beta_{s}}	&H^2(L;\Z)\ar[d]^-{\rho^{\infty}_{s}}\\
														&H^2(L;\Z/2^s).
}
\]
As $\beta_\infty^{s}(x_s)=2^{r-s}y$ for some generator $y\in H^2(L;\Z)$, we have
\[
\beta_{s}(x_s)=\rho_{s}^\infty\circ\beta_\infty^{s}(x_s)=2^{r-s}\rho_{s}^\infty(y)=2^{r-s}t y_s
\]
for some unit $t\in\mathbb{Z}/2^s$. This gives
\[
\mathcal{P}'_s(x_s)=2^{s-1}\iota_{s+1}^{s}(x_s\cup\beta_{s}x_s)=2^{r-1}t\iota_{s+1}^{s}(x_s\cup y_s)=2^rt z_{s+1}.
\]
If $s<r$, then $z_{s+1}$ has order at most $2^r$, so this expression vanishes. If $s=r$, then $2^rt z_{r+1}=2^r z_{r+1}$ is the unique element of order $2$ in $H^3(L;\mathbb{Z}/2^{r+1})$.
\end{proof}


\section{Cellular bases for 4-dimensional CW-complexes}

In this section we recall the homotopy theoretical tools used for studying the cohomological rigidity of $4$-dimensional toric orbifolds in~\cite{FSS0}.

For integers $n\geq 0$ and $\tor\geq 1$, let $\mathscr{C}_{n,\tor}$ be the full subcategory of $Top$ consisting of all $4$-dimensional simply-connected CW-complexes of the form
\[
X=\left(\bigvee^n_{i=1}S^2_i\vee P^3(\tor)\right)\cup_f e^4
\]
where each $S^2_i$ is a pointed 2-sphere,
\[
f\colon S^3=\partial e^4\to\bigvee^n_{i=1}S^2_i\vee P^3(\tor)
\]
is the attaching map of the top cell, and the subscripts $i$ are introduced solely for bookkeeping purposes. Let $[S^2_i],[P^3(\tor)]\in H_2(X;\Z)$ and $[e^4]\in H_4(X;\Z)$ be homology classes representing $S^2_i$, the bottom cell of $P^3(\tor)$, and the top cell $e^4$, respectively.
Then the homology of $X$ is given by
\[
H_*(X;\Z)\cong\Z\langle1, [S^2_1],\ldots,[S^2_n],[e^4]\rangle\oplus\Z/\tor\langle[P^3(\tor)]\rangle.
\]

\begin{definition}\label{dfn_basis}
Let $X$ be a CW-complex in $\mathscr{C}_{n,\tor}$.
 A \emph{cellular basis} for $X$ is a collection
\[
\{u_1,\ldots,u_n;w;v\}
\]
of cohomology classes of $X$, where
\begin{itemize}
\item
$u_i\in H^2(X;\Z)$ and $v\in H^4(X;\Z)$ are dual to $[S^2_i]$ and $[e^4]$
\item
$w\in H^2(X;\Z/\tor)$ is dual to the mod-$\tor$ reduction of $[P^3(\tor)]$.
\end{itemize}
\end{definition}

\begin{remark}
While every $X\in\mathscr{C}_{n,\tor}$ admits a cellular basis, it is not uniquely determined.
If $\{u_1,\ldots,u_n;w;v\}$ is a cellular basis for $X$, then so is $\{\epsilon_1 u_1,\ldots,\epsilon_n u_n;tw;\epsilon v\}$ for any unit $t\in\Z/\tor$ and $\epsilon_1,\ldots,\epsilon_n,\epsilon\in\{1,-1\}$.
\end{remark}

Given $X\in\mathscr{C}_{n,\tor}$, a cellular basis $\{u_1,\ldots,u_n;w;v\}$ for $X$ determines $H^*(X;\Z)$ and $H^*(X;\Z/\tor)$ additively. In particular,
\begin{itemize}
\item
$\{u_1,\ldots,u_n\}$ forms a basis for $\tilde{H}^2(X;\Z)$;
\item
$\{u_1,\ldots,u_n,w\}$ forms a basis for $\tilde{H}^2(X;\Z/\tor)$, where the $u_i$'s here are reduced mod-$\tor$;
\item
$\beta(w)\in H^3(X;\Z/\tor)$ is a generator, where $\beta\colon H^2(X;\Z/\tor)\to H^3(X;\Z/\tor)$ is the Bockstein homomorphism associated to the coefficient sequence $\mathbb{Z}/g\rightarrow\mathbb{Z}/g^2\rightarrow\mathbb{Z}/g$;
\item
$v$ and its mod-$\tor$ reduction are generators of $H^4(X;\Z)$ and $H^4(X;\Z/\tor)$, respectively.
\end{itemize}

Assume that $\tor$ is even. The attaching map $f$ represents an element in $\pi_3(\bigvee^n_{i=1}S^2_i\vee P^3(\tor))$. By the Hilton-Milnor Theorem it can be written
\begin{equation}\label{eqn_hilton milnor}
f\simeq\sum^n_{i=1}a_{ii}\eta_i+\sum_{1\leq j<k\leq n}a_{jk}w_{jk}+\sum^n_{\ell=1}b_{\ell}\bar{w}_{\ell}+c\bar{\eta}
\end{equation}
where $a_{ii},a_{jk}\in\Z,b_{\ell}\in\Z/\tor$ and $c\in\Z/2\tor$ are coefficients, and $\eta_i,w_{jk},\bar{w}_{\ell},\bar{\eta}$ are the composites
\begin{itemize}
\item
$\eta_i\colon S^3\overset{\eta}{\longrightarrow}S^2_i\hookrightarrow X$
\item
$w_{jk}\colon S^3\overset{[1,1]}{\longrightarrow}S^2_j\vee S^2_k\hookrightarrow X$
\item
$\bar{w}_{\ell}\colon S^3\overset{[1,\imath]}{\longrightarrow}S^2_{\ell}\vee P^3(\tor)\hookrightarrow X$
\item
$\bar{\eta}\colon S^3\overset{\eta}{\longrightarrow}S^2\hookrightarrow P^3(\tor)\hookrightarrow X$,
\end{itemize}
where $\eta$ is the Hopf map and $[1,1]$ and $[1,\imath]$ are Whitehead products.
Note that the coefficients in Equation~\eqref{eqn_hilton milnor} depend on the cellular basis $\mathcal{B}=\{u_1,\ldots,u_n;w;v\}$, since it determines the orientations of $S^2_i$ and $S^3=\partial e^4$ as well as the inclusion $\imath$ of the bottom cell into $P^3(\tor)$.
To emphasize this dependence we express these coefficients in terms of a triple
\[
(A,\textbf{b},c)\in \text{Mat}_n(\Z)\oplus(\Z/\tor)^n\oplus\Z/2\tor,
\]
where $A=(a_{jk})_{1\leq j,k\leq n}$ is an integral $n\times n$ matrix, $\textbf{b}=(b_1,\ldots,b_n)$ is an $n$-dimensional $\Z/\tor$-vector, and $c$ is a $\Z/2\tor$-coefficient.
We call $(A,\textbf{b},c)$ \emph{the attaching map coefficients} associated with $\mathcal{B}$.\footnote{In~\cite{FSS0}, where $\tor$ is assumed to be odd, $c$ is taken to be a $\Z/\tor$-coefficient and this triple is called the \emph{cellular cup product representation matrix}. Here we adopt a different name since this triple is no longer determined by the cup products in $H^*(X)$.} 

\begin{lemma}\label{lemma_hilton milnor coeff and cup prod}
Let $\{u_1,\ldots,u_n;w;v\}$ be a cellular basis of $X\in\mathscr{C}_{n,\tor}$ and let $(A,\textbf{b},c)$ be its associated attaching map coefficients.
If $\nu_2(m)=2^r$ where $r\geq 1$, then
\begin{enumerate}[label=(\alph*)]
\item\label{lemma_hilton milnor coeff and cup prod_u cup u}
$u_j\cup u_k=a_{jk}v$
\item\label{lemma_hilton milnor coeff and cup prod_u cup w}
$u_{\ell}\cup w\equiv b_{\ell}v\pmod{\tor}$
\item\label{lemma_hilton milnor coeff and cup prod_w cup w}
$w\cup w\equiv cv\pmod{\tor}$
\item\label{lemma_hilton milnor coeff and cup prod_P(w)}
$\mathcal{P}_r(w)\equiv cv\pmod{2^{r+1}}$.
\end{enumerate}
\end{lemma}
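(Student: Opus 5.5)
The plan is to reduce every statement to a computation in a small subcomplex or quotient of $X$ that carries only the relevant part of the attaching map, and then use naturality. (The hypothesis ``$\nu_2(m)=2^r$'' should be read as $\nu_2(\tor)=2^r$.) The key observation concerns the pinch maps onto wedge summands. Let $Y=\bigvee_i S^2_i\vee P^3(\tor)$. For a subwedge $Y'\subseteq Y$, the retraction $q\colon Y\to Y'$ collapsing the other summands sends $f$ to the sum of those Hilton--Milnor terms in~\eqref{eqn_hilton milnor} that involve only summands of $Y'$. The other terms die, because Whitehead products with a null map are null and $\eta_i\mapsto 0$ when $S^2_i$ is collapsed. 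Hence $q$ extends to a map $\bar q\colon X\to C_{q\circ f}=Y'\cup_{q\circ f}e^4$. This map is the identity on the top cell, so it preserves $v$, and it pulls back the cellular classes of $Y'$ to the corresponding classes $u_i$ and $w$.

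The steps are as follows.
\begin{enumerate}
\item For part (c) and part (d), take $Y'=P^3(\tor)$. Then $q\circ f=c\bar\eta$, and $\bar q^*$ sends the generators of $C_{c\bar\eta}$ to $w$ and $v$. Lemma~\ref{lemma_mapping cone c gamma} gives $w\cup w\equiv\epsilon cv$ and $\mathcal{P}_r(w)\equiv\epsilon cv$ for some unit $\epsilon$. The remaining task is to show $\epsilon=1$. The unit arises only from the choices of orientation of the cells. Since the coefficients $(A,\mathbf b,c)$ are defined relative to the orientations determined by the cellular basis, choosing $w$, $v$ compatibly with the inclusion $\imath$ and the orientation of $e^4$ forces $\epsilon'=\epsilon''=1$ in that proof. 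In other words, I would redo the proof of Lemma~\ref{lemma_mapping cone c gamma} with $x$ and $y$ chosen to be the images of the cellular classes, so that $x^2=cy$ holds with the sign convention defining $c$.
\item For part (a), take $Y'=S^2_j\vee S^2_k$ (or $S^2_j$ when $j=k$). Then $q\circ f=a_{jj}\eta+a_{jk}[1,1]+a_{kk}\eta$. The standard computation of cup products in the cone of this map, via the Hopf invariant $H(\eta)=1$ and the fact that the Whitehead product $[\iota_1,\iota_2]$ has cone $S^2\times S^2$, gives $u_j\cup u_k=a_{jk}v$ and $u_j^2=a_{jj}v$.
\item For part (b), take $Y'=S^2_\ell\vee P^3(\tor)$, so that $q\circ f=a_{\ell\ell}\eta_\ell+b_\ell\bar w_\ell+c\bar\eta$. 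Map $S^2_\ell\vee S^2\to S^2_\ell\vee P^3(\tor)$ by the inclusion of the bottom cell; the term $b_\ell\bar w_\ell$ lifts to $b_\ell[1,1]$ in $\pi_3(S^2\vee S^2)$, and $c\bar\eta$ lifts to $c\eta$ on the second sphere. As in the diagram of Lemma~\ref{lemma_mapping cone c gamma}, we get a map from the cone of $a_{\ell\ell}\eta+b_\ell[1,1]+c\eta$ (with $b_\ell$ lifted to an integer) to $C_{q\circ f}$. This map is an isomorphism on $H^4$ and on $H^2(-;\Z/\tor)$. Step 2 gives $u\cup x=b_\ell y$ upstairs, and reducing mod $\tor$ gives $u_\ell\cup w\equiv b_\ell v$.
\end{enumerate}

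I expect the main obstacle to be part (d), and in particular the sign and unit bookkeeping. Lemma~\ref{lemma_mapping cone c gamma} only gives the result up to a unit, so I need to check carefully that, with the cellular basis conventions, the comparison map $\tilde\imath$ sends $w$ to $x$ and $v$ to $y$ exactly. I also need to check that $c$ is well defined modulo $2\tor$, consistently with $\mathcal{P}_r$ taking values modulo $2^{r+1}$. The latter uses Lemma~\ref{lemma_Pontryagin properties}\ref{lemma_P properties reduct n sq}, since $w$ pulls back to the reduction of the integral class $x$.
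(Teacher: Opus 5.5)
Your proposal is correct, and for parts (c) and (d) it is the paper's argument. You pinch $\bigvee_i S^2_i\vee P^3(\tor)$ onto $P^3(\tor)$ so that $q\circ f\simeq c\bar\eta$, apply Lemma~\ref{lemma_mapping cone c gamma} via the induced map $X\to C_{c\bar\eta}$, and then remove the unit $\epsilon$ by rerunning that lemma's proof with $x,y$ taken to be the cellular classes, so that $\epsilon'=\epsilon''=1$. The only difference is in (a) and (b): the paper simply cites \cite[Lemma 4.1]{FSS0}, whereas you give a self-contained argument by pinching onto $S^2_j\vee S^2_k$ or $S^2_\ell\vee P^3(\tor)$ and lifting along the bottom-cell inclusion $S^2_\ell\vee S^2\to S^2_\ell\vee P^3(\tor)$. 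That argument is sound, since this inclusion induces isomorphisms on $H^2(-;\Z/\tor)$ and on $H^4$.
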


\begin{proof}
Statements~\ref{lemma_hilton milnor coeff and cup prod_u cup u} and~\ref{lemma_hilton milnor coeff and cup prod_u cup w} were proved in~\cite[Lemma 4.1]{FSS0}.

Here we prove~\ref{lemma_hilton milnor coeff and cup prod_w cup w} and~\ref{lemma_hilton milnor coeff and cup prod_P(w)}.
Write $X_3=\bigvee^n_{i=1}S^2\vee P^3(\tor)$ for short. Let $q\colon X_3\rightarrow P^3(\tor)$ be the pinch map and let $Y$ be the cofibre of the composite $q\circ f\simeq c\bar{\eta}$. Now consider the homotopy commutative diagram of cofibre sequences
\[
\xymatrix{S^3\ar@{=}[d]\ar[r]^-f&X_3\ar[r]\ar[d]^-q&X\ar[d]^-{\tilde{q}}\ar[r]&S^4\ar@{=}[d]\\
S^3\ar[r]^-{c\bar{\eta}}&P^3(\tor)\ar[r]&Y\ar[r]&S^4}
\]
where $\tilde{q}$ is an induced map collapsing $\bigvee^n_{i=1}S^2_i$ in $X$.
Let $\tilde{w}\in H^2(Y;\Z/\tor)$ and $\tilde{v}\in H^4(Y;\Z)$ be generators such that
$\tilde{q}^*(\tilde{w})=w$ and $\tilde{q}^*(\tilde{v})=v$.
By Lemma~\ref{lemma_mapping cone c gamma} they satisfy
\[
\tilde{w}\cup \tilde{w}\equiv\epsilon c\tilde{v}\pmod{\tor}
\quad\text{and}\quad
\mathcal{P}_r(\tilde{w})\equiv\epsilon c\tilde{v}\pmod{2^{r+1}},
\]
for some unit $\epsilon\in\Z/2g$.
Note that $Y$ is a complex in $\mathscr{C}_{0,\tor}$ and $\{\tilde{w};\tilde{v}\}$ is its cellular basis.
Revising the proof of Lemma~\ref{lemma_mapping cone c gamma} aand taking $x\in H^2(C_{c\eta};\Z)$ and $y\in H^4(C_{c\eta};\Z)$ there to be cellular basis generators, we see that $\epsilon$ can be taken to be $1$. It then follows from the naturality of cup products and Pontryagin squares that
\[
w\cup w\equiv cv\pmod{\tor}
\quad\text{and}\quad
\mathcal{P}_r(w)\equiv cv\pmod{2^{r+1}},
\]
so the proof is complete.
\end{proof}

\section{Proper isomorphisms}

Let $X$ and $X'$ be CW-complexes in $\mathscr{C}_{n,\tor}$.
In this section we study a cohomological condition under which they are homotopy equivalent.
Clearly, a homotopy equivalence between $X$ and $X'$ induces isomorphisms between their cohomology rings $H^*(X;R)$ and $H^*(X';R)$ for any coefficient ring $R$.
On the other hand, the Hilton-Milnor Theorem implies that the attaching maps of the top cells are of the form~\eqref{eqn_hilton milnor} and are homotopic to sums of Hopf maps and Whitehead products with coefficients in $\Z,\Z/2\tor$, and $\Z/\tor$. 
It is therefore natural to expect that this homotopy-theoretic information is reflected in the induced isomorphisms $H^*(X;R)\to H^*(X';R)$ for $R=\Z,\Z/2\tor$ and $\Z/\tor$. This leads to the following definition.

\begin{definition}\label{def_proper isom}
Given $X,X'\in\mathscr{C}_{n,\tor}$, a \emph{proper isomorphism} between them is a collection
\[
\varphi=\{\varphi_t\colon H^*(X;\Z/t)\to H^*(X';\Z/t)\mid t\in\{\tor,2\tor,\infty\}\}
\]
of graded ring isomorphisms such that the diagram
\begin{equation}\label{dgrm_proper isom commute rho}
\begin{gathered}
\xymatrix{
H^*(X;\Z)\ar[r]^-{\rho}\ar[d]^-{\varphi_{\infty}}	&H^*(X;\Z/2\tor)\ar[r]^-{\rho}\ar[d]^-{\varphi_{2\tor}}	&H^*(X;\Z/\tor)\ar[d]^-{\varphi_{\tor}}\\
H^*(X';\Z)\ar[r]^-{\rho}						&H^*(X';\Z/2\tor)\ar[r]^-{\rho}							&H^*(X';\Z/\tor)
}
\end{gathered}
\end{equation}
commutes, where $\rho$'s are reductions. We denote it by $\varphi\colon X' \dashrightarrow X$ since $\varphi$ is not an actual map between spaces. We say that $X$ and $X'$ are \emph{properly isomorphic}, written $X\cong_{\text{prop}}X'$, if there exists a proper isomorphism between them.
\end{definition}

Proper isomorphisms $\varphi\colon X' \dashrightarrow X$ and $\varphi'\colon X'' \dashrightarrow X'$ may be composed to give a proper isomorphism $\varphi\circ\varphi'\colon X'' \dashrightarrow X$ defined by
\begin{equation}\label{eq_composition}
(\varphi\circ\varphi')_{t}=\varphi'_{t}\circ\varphi_{t}\colon H^*(X;\Z/t)\to H^*(X'';\Z/t).
\end{equation}
Similarly, any proper isomorphism $\varphi\colon X' \dashrightarrow X$ has an inverse $\varphi^{-1}\colon X \dashrightarrow X'$ defined by
\[
(\varphi^{-1})_{t}=(\varphi_{t})^{-1}\colon H^*(X';\Z/t)\to H^*(X;\Z/t).
\]
We leave it to the reader to check that $\varphi'\circ\varphi$ and $\varphi^{-1}$ are again proper isomorphisms. Since the identity morphisms on $H^*(X;R)$ assemble into a proper morphism $id\colon X\dashrightarrow X$, being properly isomorphic defines an equivalence relation on $\mathscr{C}_{n,\tor}$.

Being properly isomorphic is a weaker notion than homotopy equivalence. If $h\colon X'\to X$ is a homotopy equivalence, then the family $\{h^*\colon H^*(X;\Z/t)\rightarrow H^*(X';\Z/t)\mid t\in\{\tor,2\tor,\infty\}\}$ is a proper isomorphism between them. We would like to determine conditions under which a proper isomorphism arises from a homotopy equivalence.

For any $X\in\mathscr{C}_{n,\tor}$, let
\[
PI(X)=\{X'\in\mathscr{C}_{n,\tor}\mid X\cong_{\text{prop}}X'\}
\quad\text{and}\quad
HE(X)=\{X'\in\mathscr{C}_{n,\tor}\mid X\simeq X'\}
\]
be the collections of CW-complexes in $\mathscr{C}_{n,\tor}$ that are properly isomorphic to and homotopy equivalent to $X$, respectively.
Since homotopy equivalence is an equivalence relation in $PI(X)$, we define
\[
h(X)=|PI(X)/HE(X)|
\]
to count the number of distinct homotopy types of CW-complexes in $PI(X)$.

Consider the cohomological rigidity problem in the sense of Problem~\ref{problem_new cohomological rigidity problem}.
The index $h(X)$ measures how close a space $X\in\mathscr{C}_{n,\tor}$ is to being cohomologically rigid. In particular, $h(X)=1$ if and only if $X$ is cohomologically rigid.
Therefore Problem~\ref{problem_new cohomological rigidity problem} can be restated as follows.

\begin{problem}
For which complexes $X\in\mathscr{C}_{n,\tor}$ does it holds that $h(X)=1$?
\end{problem}

Let $\varphi\colon X'\dashrightarrow X$ be a proper isomorphism. A necessary condition for it to be induced by a homotopy equivalence is that it commutes with all cohomology operations. More precisely, for any cohomology operation $\psi\colon H^*(-;\Z/t)\to H^*(-;\Z/s)$, the following diagram commutes
\[
\xymatrix{
H^*(X';\Z/t)\ar[r]^-{\varphi_{t}}\ar[d]^-{\psi}	&H^*(X;\Z/t)\ar[d]^-{\psi}\\
H^*(X';\Z/s)\ar[r]^-{\varphi_{s}}					&H^*(X;\Z/s).
}
\]

We will prove that a criterion for two CW complexes $X,X'\in\mathscr{C}_{n,\tor}$ to be homotopy equivalent is the existence of a proper isomorphism between them that commutes with the Pontryagin squares. To begin with, we prove a preparatory lemma.

\begin{lemma}\label{lemma_basis define skeleton}
For $X,X'\in\mathscr{C}_{n,\tor}$, let $\varphi\colon X'\dashrightarrow X$ be a proper isomorphism. If $\{u_1,\ldots,u_n;w;v\}$ is a cellular basis for $X$, then there exists a homotopy equivalence
\[
X'\simeq\left(\bigvee^n_{i=1}S^2_i\vee P^3(\tor)\right)\cup_{f'} e^4
\]
such that $\{\varphi_{\infty}(u_1),\ldots,\varphi_{\infty}(u_n);\varphi_{\tor}(w);\varphi_{\infty}(v)\}$ is a cellular basis for $X'$ with respect to this cellular structure.
\end{lemma}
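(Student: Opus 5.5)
The idea is to realise the prescribed classes by maps from spheres and a Moore space into $X'$, assemble them into a map from the $3$-skeleton model, and then attach the top cell. Write $X'=(\bigvee_{i=1}^n S^2_i\vee P^3(\tor))\cup_{g}e^4$ with its own cellular structure, and set $X'_3=\bigvee S^2_i\vee P^3(\tor)$ for its $3$-skeleton. Put $u_i'=\varphi_\infty(u_i)$, $w'=\varphi_\tor(w)$ and $v'=\varphi_\infty(v)$.

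First I check the cohomological facts about these classes.
\begin{itemize}
\item Since $\varphi_\infty$ is an isomorphism, $\{u_i'\}$ is a basis of $H^2(X';\Z)$ and $v'$ generates $H^4(X';\Z)$.
\item Since $\varphi_\tor$ is an isomorphism commuting with reduction, $\{\rho(u_i'),w'\}$ is a basis of $H^2(X';\Z/\tor)\cong(\Z/\tor)^{n+1}$.
\item $w'$ is not a reduction of an integral class modulo the span of the $\rho(u_i')$. Indeed $w$ is not such a reduction, and $\varphi_\tor\rho=\rho\varphi_\infty$ preserves the image of $\rho$.
\item Consequently $\beta(w')$ generates $H^3(X';\Z/\tor)$.
\end{itemize}
Only the $\Z$ and $\Z/\tor$ parts of the diagram seem needed here; the $\Z/2\tor$ part is irrelevant for this lemma.

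Next I dualise to homology. By the universal coefficient theorem, $H^2(X';\Z)\cong\mathrm{Hom}(H_2(X'),\Z)$ and $H^2(X';\Z/\tor)\cong\mathrm{Hom}(H_2(X'),\Z/\tor)$, since $H_1=0$. Here $H_2(X')\cong\Z^n\oplus\Z/\tor$. The conditions above let me choose a decomposition $H_2(X')=\Z\langle e_1,\dots,e_n\rangle\oplus\Z/\tor\langle e_0\rangle$ with the following properties:
\begin{itemize}
\item $u_i'(e_j)=\delta_{ij}$ and $u_i'(e_0)=0$;
\item $w'(e_j)=0$ for $j\ge1$ and $w'(e_0)=1$.
\end{itemize}
To obtain this, take $e_1,\dots,e_n$ to be dual to $u_1',\dots,u_n'$ on the free quotient. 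Correct the lifts by multiples of a torsion generator so that $w'$ vanishes on them. Finally rescale the torsion generator $e_0$ by a unit so that $w'(e_0)=1$. This is possible because $\{\rho(u_i'),w'\}$ is a basis, so $w'$ restricted to torsion is an isomorphism onto $\Z/\tor$.

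Now I realise the basis by maps. By Hurewicz, $\pi_2(X')\cong H_2(X')$, so there are maps $S^2_i\to X'$ representing $e_i$. The torsion class $e_0$ has order $\tor$. A map $S^2\to X'$ representing it, composed with degree $\tor$, is null because $\pi_2=H_2$. So it extends to $P^3(\tor)\to X'$, which factors through $X'_3$ by cellular approximation. This gives $h\colon\bigvee S^2_i\vee P^3(\tor)\to X'_3$ inducing an isomorphism on $H_2$. The main obstacle is showing that $h$ also induces an isomorphism on $H_3$, equivalently on $H^3(-;\Z/\tor)$. Integrally $H_3=0$ on both sides, so $h$ is a homology isomorphism of simply connected complexes and hence a homotopy equivalence by Whitehead. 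The $\Z/\tor$ statement then follows, consistent with $\beta(w')$ being a generator.

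Finally I transport the top cell. Let $f'=h^{-1}\circ g$. Then $X'\simeq(\bigvee S^2_i\vee P^3(\tor))\cup_{f'}e^4$, via a map extending $h$. Under this equivalence the classes $u_i'$ and $w'$ are dual to $[S^2_i]$ and to the reduction of $[P^3(\tor)]$, by construction of the $e_j$. The class $v'$ generates $H^4$; if it is dual to $-[e^4]$, I reverse the orientation of the top cell, precomposing $f'$ with a degree $-1$ map of $S^3$. Hence $\{u_i';w';v'\}$ is a cellular basis for this cellular structure.
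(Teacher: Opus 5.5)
Your proof is correct and follows the same route as the paper. You realise homology classes dual to $\varphi_\infty(u_i)$ and $\varphi_\tor(w)$ via Hurewicz by maps from spheres and a Moore space, note that the assembled map to the $3$-skeleton is a homotopy equivalence by Whitehead, and transport the top-cell attaching map; you also make explicit what the paper leaves implicit, namely that compatibility with reduction is what makes the dual basis $\{e_1,\dots,e_n,e_0\}$ of $H_2(X')$ exist, and that the sign of $v'$ may require reversing the orientation of the top cell.
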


\begin{proof}
For convenience, write $u'_i=\varphi_{\infty}(u_i),w'=\varphi_{\tor}(w)$ and $v'=\varphi_{\infty}(v)$. Since $X'$ is simply connected, the Hurewicz homomorphism gives an isomorphism
\[
\mathrm{hur}\colon \pi_2(X')\overset{\cong}{\longrightarrow}H_2(X';\Z)\cong\Z^n\oplus\Z/\tor.
\]
Let $\mu_i\colon S^2\to X'$ and $\tilde{\omega}\colon S^2\to X'$  be maps such that $\mathrm{hur}(\mu_i)$ and $\mathrm{hur}(\tilde{\omega})$ are dual to $u'_i$ and $w'$, respectively. Since $\tilde{\omega}$ has order $\tor$, it extends to a map $\omega\colon P^3(\tor)\to X$.
Take $\imath$ to be the wedge sum
\[
\imath=\bigvee^n_{i=1}\mu_i\vee\omega\colon\bigvee^n_{i=1}S^2\vee P^3(\tor)\to X'.
\]
Then $\imath$ factors through the $3$-skeleton $\mathrm{skel}_3X'$ of $X'$. Since it induces an isomorphism in homology in degrees $\leq 3$, the Whitehead Theorem implies that $\imath$ is a homotopy equivalence between $\bigvee^n_{i=1}S^2\vee P^3(\tor)$ and $\mathrm{skel}_3X'$. Let $f'$ be the composite
\[
f'\colon S^3\longrightarrow\mathrm{skel}_3X'\overset{\simeq}{\longrightarrow}\bigvee^n_{i=1}S^2\vee P^3(\tor),
\]
where the first map attaches the top cell of $X'=\mathrm{skel}_4X$, and let $C_{f'}$ be its mapping cone.
Consider the homotopy commutative diagram
\[
\xymatrix{
S^3\ar[r]\ar[d]^-{=}	&\mathrm{skel}_3X'\ar[r]\ar[d]^-{\imath}	&X'\ar[d]^-{\tilde{\imath}}\\
S^3\ar[r]^-{f'}		&\bigvee^n_{i=1}S^2_i\vee P^3(\tor)\ar[r]	&C_{f'}
}
\]
where $\tilde{\imath}$ is an induced map. The map $\tilde{\imath}$ is a homology isomorphism by the Five Lemma, and hence is a homotopy equivalence by Whitehead's Theorem. By construction, $\{u'_1,\ldots,u'_n;w';v'\}$ is a cellular basis for $C_{f'}\simeq X'$, so the proof is complete.
\end{proof}

Suppose that $\nu_2(g)=2^{r+1}$. We define ring homomorphisms
\[
\varphi_{2^{r+1}}\colon H^*(X;\Z/2^{r+1})\to H^*(X';\Z/2^{r+1})
\quad\text{and}\quad
\varphi_{2^{r}}\colon H^*(X;\Z/2^{r})\to H^*(X';\Z/2^{r})
\]
via the following diagrams
\[
\xymatrix{H^*(X;\Z/2g)\ar[r]^-{\varphi_{2g}}\ar[d]^-{\rho}&H^*(X';\Z/2g)\ar[d]^-{\rho}\\
H^*(X;\Z/2^{r+1})\ar@{-->}[r]^-{\varphi_{2^{r+1}}}&H^*(X';\Z/2^{r+1})}
\quad\quad
\xymatrix{H^*(X;\Z/g)\ar[r]^-{\varphi_{g}}\ar[d]^-{\rho}&H^*(X';\Z/g)\ar[d]^-{\rho}\\
H^*(X;\Z/2^r)\ar@{-->}[r]^-{\varphi_{2^r}}&H^*(X';\Z/2^r)}
\]
where $\rho$'s are the reductions. Since the reductions are surjective, $\varphi_{2^{r+1}}$ and $\varphi_{2^r}$ are well-defined and unique. Since $\varphi_{2g}$ and $\varphi_g$ are ring isomorphisms, so are $\varphi_{2^{r+1}}$ and $\varphi_{2^r}$. Moreover, using the above diagrams and Diagram~\eqref{dgrm_proper isom commute rho} one can check that
$\varphi_{2^r}\circ\rho^{r+1}_{r}=\rho^{r+1}_{r}\circ\varphi_{2^{r+1}}$.

\begin{lemma}\label{lemma_criteria}
Let $X$ and $X'$ be complexes in $\mathscr{C}_{n,\tor}$, where $\nu_2(\tor)=2^r$. Then $X\simeq X'$ if and only if
there exists a proper isomorphism $\varphi\colon X'\dashrightarrow X$ which commutes with Pontryagin square $\mathcal{P}_r\colon H^2(-;\Z/2^r)\to H^4(-;\Z/2^{r+1})$ in the sense that
\[
\mathcal{P}_r\circ\varphi_{2^r}=\varphi_{2^{r+1}}\circ\mathcal{P}_r.
\]
\end{lemma}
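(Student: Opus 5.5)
The forward direction is immediate. A homotopy equivalence $h\colon X'\to X$ induces the proper isomorphism $\{h^*\}$. The maps $\varphi_{2^r},\varphi_{2^{r+1}}$ it determines are again just $h^*$ with the corresponding coefficients, because $h^*$ commutes with reductions. Naturality of $\mathcal{P}_r$ then gives $\mathcal{P}_r\circ h^*=h^*\circ\mathcal{P}_r$.

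For the converse, fix a cellular basis $\mathcal{B}=\{u_1,\ldots,u_n;w;v\}$ for $X$ with attaching map coefficients $(A,\textbf{b},c)$. By Lemma~\ref{lemma_basis define skeleton} we may replace $X'$ by a complex $(\bigvee S^2_i\vee P^3(\tor))\cup_{f'}e^4$ for which $\mathcal{B}'=\{\varphi_\infty(u_i);\varphi_\tor(w);\varphi_\infty(v)\}$ is a cellular basis. Let $(A',\textbf{b}',c')$ be its attaching map coefficients. The plan is to show $(A,\textbf{b},c)=(A',\textbf{b}',c')$. Then $f\simeq f'$ by the Hilton–Milnor decomposition~\eqref{eqn_hilton milnor}, since both attaching maps are the same combination of $\eta_i,w_{jk},\bar w_\ell,\bar\eta$ in $\pi_3$ of the same $3$-skeleton, and so $X\simeq X'$.

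Lemma~\ref{lemma_hilton milnor coeff and cup prod} reads the coefficients off from cohomology. Since $\varphi_\infty$ is a ring map, $u'_j\cup u'_k=\varphi_\infty(u_j\cup u_k)=a_{jk}v'$, so $A'=A$. For $\textbf{b}$, use that $\varphi_\tor$ is a ring map commuting with reduction. It sends $\rho(u_\ell)$ to $\rho(u'_\ell)$ and $\rho(v)$ to $\rho(v')$, so $u'_\ell\cup w'\equiv b_\ell v'\pmod\tor$, giving $\textbf{b}'=\textbf{b}$. The same argument with $w\cup w$ gives $c'\equiv c\pmod\tor$.

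The remaining issue, and the main obstacle, is to pin down $c\in\Z/2\tor$ rather than just modulo $\tor$. Write $2\tor=2^{r+1}q$ with $q$ odd, so that $\Z/2\tor\cong\Z/2^{r+1}\oplus\Z/q$ and $\Z/\tor\cong\Z/2^r\oplus\Z/q$. The odd part of $c'$ agrees with that of $c$ by the mod $\tor$ congruence. For the $2$-part, apply Lemma~\ref{lemma_hilton milnor coeff and cup prod}\ref{lemma_hilton milnor coeff and cup prod_P(w)}: $\mathcal{P}_r(\rho w)\equiv cv\pmod{2^{r+1}}$ and $\mathcal{P}_r(\rho w')\equiv c'v'\pmod{2^{r+1}}$. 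Here $\rho w'=\varphi_{2^r}(\rho w)$ by the definition of $\varphi_{2^r}$. Also $\varphi_{2^{r+1}}(\rho v)=\rho v'$, which follows from the diagram for $\varphi_{2^{r+1}}$ together with Diagram~\eqref{dgrm_proper isom commute rho}. The hypothesis $\mathcal{P}_r\circ\varphi_{2^r}=\varphi_{2^{r+1}}\circ\mathcal{P}_r$ then yields
\[
c'\rho v'=\mathcal{P}_r(\varphi_{2^r}\rho w)=\varphi_{2^{r+1}}(c\rho v)=c\rho v' .
\]
Hence $c\equiv c'\pmod{2^{r+1}}$. Combining the two congruences via the Chinese remainder theorem gives $c=c'$ in $\Z/2\tor$.

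The care needed is in two places. First, one must check that the conventions of Lemma~\ref{lemma_hilton milnor coeff and cup prod} (the choice of generator in $H^4$, and which Pontryagin square is used) apply verbatim to $\mathcal{B}'$. Second, one must check that $\varphi_{2^{r+1}}$ really does send $\rho v$ to $\rho v'$.
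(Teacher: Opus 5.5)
Your proposal is correct and follows essentially the same route as the paper. You realize $\{\varphi_\infty(u_i);\varphi_\tor(w);\varphi_\infty(v)\}$ as a cellular basis for $X'$ via Lemma~\ref{lemma_basis define skeleton}, read off $A=A'$, $\textbf{b}=\textbf{b}'$ and $c\equiv c'\pmod{\tor}$ from the ring isomorphisms, and then use Lemma~\ref{lemma_hilton milnor coeff and cup prod}\ref{lemma_hilton milnor coeff and cup prod_P(w)} with the commutation hypothesis to get $c=c'$ in $\Z/2\tor$, whence $f\simeq f'$. The only difference is that you spell out two steps the paper leaves implicit: the Chinese remainder argument combining the congruences mod $\tor$ and mod $2^{r+1}$, and the check that $\varphi_{2^{r+1}}(\rho v)=\rho v'$.
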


\begin{proof}
It is clear that any proper isomorphism induced by a homotopy equivalence commutes $\mathcal{P}_r$. Thus we need only prove necessity.

Thus suppose such a proper isomorphism $\varphi$ exists. Let $\mathcal{B}=\{u_1,\ldots,u_n;w;v\}$ be a cellular basis for $X$. By Lemma~\ref{lemma_basis define skeleton}, $X'$ has a cellular structure such that
\[
\mathcal{B}'=\{u'_1=\varphi_{\infty}(u_1),\ldots,u'_n=\varphi_{\infty}(u_n);w'=\varphi_{\tor}(w);v'=\varphi_{\infty}(v)\}
\]
is its cellular basis.
Let $f,f'\colon S^3\longrightarrow \bigvee^n_{i=1}S^2_i\vee P^3(\tor)$ be the attaching maps of the top cell in $X$ and $X'$,
and let $(A,\textbf{b},c)$ and $(A',\textbf{b}',c')$ be the attaching map coefficients associated with $\mathcal{B}$ and $\mathcal{B}'$, respectively.
Then we have
\[
f\simeq\sum^n_{i=1}a_{ii}\eta_i+\sum_{1\leq j<k\leq n}a_{jk}w_{jk}+\sum^n_{\ell}b_{\ell}\bar{w}_{\ell}+c\bar{\eta}
\]
and
\[
f'\simeq\sum^n_{i=1}a'_{ii}\eta_i+\sum_{1\leq j<k\leq n}a'_{jk}w_{jk}+\sum^n_{\ell}b'_{\ell}\bar{w}_{\ell}+c'\bar{\eta}
\]
for some coefficients $a_{ij},a'_{ij}\in\Z$, $b_{k},b'_k\in\Z/\tor$, and $c,c'\in\Z/2\tor$.
By Lemma~\ref{lemma_hilton milnor coeff and cup prod}, we have
\[
u_j\cup u_k=a_{jk}v
\quad\text{and}\quad
u'_j\cup u'_k=a'_{jk}v'
\]
for $1\leq j,k\leq n$.
Since $\varphi_{\infty}$ is a ring isomorphism, we get $u'_j\cup u'_k=a_{jk}v'$ and hence $a_{jk}=a'_{jk}$.
A similar argument shows that $b_{\ell}=b'_{\ell}\pmod{\tor}$ for $1\leq\ell\leq n$, and $c\equiv c'\pmod{\tor}$.
Moreover, by assumption, $\varphi$ commutes with $\mathcal{P}_r$, so $\mathcal{P}_r(w)\equiv\mathcal{P}_r(w')\pmod{2^{r+1}}$ and hence $c\equiv c'\pmod{2\tor}$. It follows that $f\simeq f'$ and hence that $X\simeq X'$.
\end{proof}

\begin{remark}
In Whitehead's paper~\cite{Wh}, a \emph{proper isomorphism} between simply-connected $4$-dimensional CW-complexes $X$ and $X'$ is defined to be a collection of ring isomorphisms
\[
\{\varphi_t\colon H^*(X;\Z/t)\to H^*(X';\Z/t)\mid 2\leq t\leq \infty\},
\]
where $\Z/t$ means $\Z$ for $t=\infty$, such that they commute with
\begin{itemize}
\item
reductions $\rho\colon H^*(-;\Z/t)\to H^*(-;\Z/s)$ for $s$ dividing $t$ or $t=\infty$,
\item
inclusions $\imath\colon H^*(-;\Z/t)\to H^*(-;\Z/s)$ for $t$ dividing $s$,
\item 
Bockstein homomorphisms $\beta\colon H^*(-;\Z/s)\to H^{*+1}(-;\Z/t)$ for $s\in\mathbb{N}$ and $t\in\mathbb{N}\cup\infty$,
\item
Pontryagin squares $\mathcal{P}_{\ell}\colon H^2(-;\Z/2^{\ell})\to H^4(-;\Z/2^{\ell+1})$ for $\ell\geq 1$.
\end{itemize}
Whitehead proved that $X$ and $X'$ are homotopy equivalent if and only if there exists such a proper isomorphism between them.
Lemma~\ref{lemma_criteria} may be regarded as a refinement of this result. Whitehead considers ring isomorphisms for all coefficients $\Z/t$ and requires them to commute with more cohomology operations. 
In contrast, Definition~\eqref{def_proper isom} and Lemma~\ref{lemma_criteria} consider ring isomorphisms only for the coefficients $\Z,\Z/2\tor$, and $\Z/\tor$, and require them to commute only with the three reduction maps and the single Pontryagin square $\mathcal{P}_r$. The trade-off is that Lemma~\ref{lemma_criteria} applies only to a smaller class of spaces in $\mathscr{C}_{n,\tor}$.
\end{remark}

Motivated by Lemma~\ref{lemma_criteria} we define the commutator $[\mathcal{P}_r,\varphi]$ of a proper isomorphism $\varphi\colon X'\dashrightarrow X$ and a Pontryagin square $\mathcal{P}_r$ to be the function
\begin{equation}\label{eq_commutator}
[\mathcal{P}_r,\varphi]=\mathcal{P}_{r}\circ\varphi_{2^r}-\varphi_{2^{r+1}}\circ\mathcal{P}_r\colon H^2(X;\Z/2^r)\to H^4(X';\Z/2^{r+1}).
\end{equation}
Then $\mathcal{P}_r$ and $\varphi$ commute if and only if $[\mathcal{P}_r,\varphi]=0$. The following lemma describes some basic properties of the commutator.

\begin{lemma}\label{lemma_properties of commutator}
Given $X,X'\in\mathscr{C}_{n,\tor}$, let $\varphi\colon X'\dashrightarrow X$ be a proper isomorphism. 
Then
\begin{enumerate}[label=(\alph*)]
\item\label{lemma_commutator property_linear}
$[\mathcal{P}_r,\varphi](x+y)=[\mathcal{P}_r,\varphi](x)+[\mathcal{P}_r,\varphi](y)$ for $x,y\in H^2(X;\Z/2^r)$ 
\item\label{lemma_commutator property_2^r}
$2\cdot[\mathcal{P}_r,\varphi]=0$
\item\label{lemma_commutator property_zero on integral}
$[\mathcal{P}_r,\varphi]\circ\rho_{r}^{r+1}=0$.
\end{enumerate}
Moreover, let $X''\in\mathscr{C}_{n,g}$ and let $\varphi'\colon X''\dashrightarrow X'$ be another proper isomorphism. Then
\begin{enumerate}[label=(\alph*)]
\setcounter{enumi}{3}
    \item\label{lemma_commutator of composition}
    $[\mathcal{P}_r,\varphi\circ\varphi']=\varphi'_{2^{r+1}}\circ[\mathcal{P}_r,\varphi]+[\mathcal{P}_r,\varphi']\circ\varphi_{2^{r}}$
    \item\label{lemma_commutator of inverse}
    $[\mathcal{P}_r,\varphi^{-1}]=\varphi^{-1}_{2^{r+1}}\circ[\mathcal{P}_r,\varphi]\circ\varphi^{-1}_{2^{r}}$.
\end{enumerate}
\end{lemma}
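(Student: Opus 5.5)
The plan is to verify all five statements directly from the definition \eqref{eq_commutator}, using Lemma~\ref{lemma_Pontryagin properties} together with the fact that $\varphi_{2^r}$ and $\varphi_{2^{r+1}}$ are ring isomorphisms compatible with reduction. Throughout, $\nu_2(\tor)=2^r$ is understood as in Lemma~\ref{lemma_criteria}, so that $\varphi_{2^r}$ and $\varphi_{2^{r+1}}$ are defined via the reductions from $\Z/\tor$ and $\Z/2\tor$. One point needs checking first: these maps also commute with the inclusion $\imath^r_{r+1}$. I would get this by observing that $\imath^r_{r+1}$ is induced by a coefficient homomorphism $\Z/2^r\to\Z/2^{r+1}$ that lifts to the inclusion $\Z/\tor\to\Z/2\tor$. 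The latter is compatible with $\varphi_\tor,\varphi_{2\tor}$ in the needed sense, since in the degrees involved ($H^4\cong\Z/t$ for $X\in\mathscr{C}_{n,\tor}$) the ring isomorphisms are determined by their action on a generator that reduces from $H^4(-;\Z)$. This is the main technical obstacle, and I would handle it degreewise using the explicit cohomology groups.

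\textbf{Step (a).} Expand $[\mathcal{P}_r,\varphi](x+y)$ using Lemma~\ref{lemma_Pontryagin properties}\ref{lemma_P properties almost linear} on both sides. The cross terms are $\imath^r_{r+1}(\varphi_{2^r}x\cup\varphi_{2^r}y)$ and $\varphi_{2^{r+1}}\imath^r_{r+1}(x\cup y)$. These agree because $\varphi_{2^r}$ is multiplicative and $\varphi$ commutes with $\imath^r_{r+1}$, so the cross terms cancel.

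\textbf{Step (b).} Apply Lemma~\ref{lemma_Pontryagin properties}\ref{lemma_P properties refines square}. This gives $2[\mathcal{P}_r,\varphi](x)=\imath^r_{r+1}((\varphi_{2^r}x)^2)-\varphi_{2^{r+1}}\imath^r_{r+1}(x^2)$, which vanishes by the same compatibility.

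\textbf{Step (c).} Take $z\in H^2(X;\Z/2^{r+1})$. By Lemma~\ref{lemma_Pontryagin properties}\ref{lemma_P properties reduct n sq} together with $\varphi_{2^r}\rho^{r+1}_r=\rho^{r+1}_r\varphi_{2^{r+1}}$, we get
\[
\mathcal{P}_r\varphi_{2^r}\rho z=\mathcal{P}_r\rho\varphi_{2^{r+1}}z=(\varphi_{2^{r+1}}z)^2=\varphi_{2^{r+1}}(z^2)=\varphi_{2^{r+1}}\mathcal{P}_r\rho z.
\]

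\textbf{Steps (d) and (e).} These are formal. First note that the induced maps of a composite satisfy $(\varphi\circ\varphi')_{2^s}=\varphi'_{2^s}\circ\varphi_{2^s}$, by uniqueness of the maps induced through reductions. For (d), add and subtract $\varphi'_{2^{r+1}}\mathcal{P}_r\varphi_{2^r}$ in the definition of $[\mathcal{P}_r,\varphi\circ\varphi']$. For (e), apply (d) to $\varphi\circ\varphi^{-1}=id$, whose commutator is zero. This gives
\[
[\mathcal{P}_r,\varphi^{-1}]\circ\varphi_{2^r}=-\varphi^{-1}_{2^{r+1}}\circ[\mathcal{P}_r,\varphi],
\]
with the order of composition following the contravariant convention \eqref{eq_composition}. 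The sign is irrelevant by (b), and composing with $\varphi_{2^r}^{-1}$ finishes the argument.
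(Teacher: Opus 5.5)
Your proof is correct and follows the paper's argument. The crux in both is that $\varphi_{2^r}$ and $\varphi_{2^{r+1}}$ commute with $\imath^r_{r+1}$ on $H^4$ because everything there is reduced from $H^4(-;\Z)$. The paper does this directly via $\imath^r_{r+1}\circ\rho^\infty_r=\rho^\infty_{r+1}\circ 2$ and surjectivity of $\rho^\infty_r$, without your detour through $\Z/\tor\to\Z/2\tor$. Parts (a) and (c)--(e) are then handled identically.

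The only real deviation is in (b). You apply Lemma~\ref{lemma_Pontryagin properties}\ref{lemma_P properties refines square} together with that same compatibility. The paper instead uses (a) and Lemma~\ref{lemma_Pontryagin properties}\ref{lemma_P properties const square} to get $2[\mathcal{P}_r,\varphi](x)=[\mathcal{P}_r,\varphi](2x)=4[\mathcal{P}_r,\varphi](x)$. Both routes work equally well.
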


\begin{proof}
First, we prove~\ref{lemma_commutator property_linear}. By Lemma~\ref{lemma_Pontryagin properties}~\ref{lemma_P properties almost linear} we have
\[
[\mathcal{P}_r,\varphi](x+y)
=[\mathcal{P}_r,\varphi](x)+[\mathcal{P}_r,\varphi](y)+\imath^r_{r+1}\circ\varphi_{2^r}(x\cup y)-\varphi_{2^{r+1}}\circ\imath^r_{r+1}(x\cup y).
\]
It remains to show that $\imath^r_{r+1}\circ\varphi_{2^r}(x\cup y)-\varphi_{2^{r+1}}\circ\imath^r_{r+1}(x\cup y)$ is zero. In fact, this will follows from the stronger result that $\imath^r_{r+1}\circ\varphi_{2^r}=\varphi_{2^{r+1}}\circ\imath^r_{r+1}$ as maps $H^4(X;\mathbb{Z}/2^r)\rightarrow H^4(X';\mathbb{Z}/2^{r+1})$. To see this, notice that $\imath^r_{r+1}\circ\rho^\infty_{r}=\rho_{r+1}^\infty\circ 2$ where $2$ denotes multiplication by $2$. Then
\[
\varphi_{2^{r+1}}\circ\imath^r_{r+1}\circ\rho_{r}^\infty=\varphi_{2^{r+1}}\circ\rho_{r+1}^\infty\circ 2=\rho_{r+1}^\infty\circ\varphi_{\infty}\circ 2
\]
and similarly 
\[
\imath^r_{r+1}\circ\varphi_{2^r}\circ\rho_{r}^\infty=\imath^r_{r+1}\circ\rho_{r}^\infty\circ\varphi_{\infty}=\rho_{r+1}^\infty\circ2\circ\varphi_{\infty}=\rho_{r+1}^\infty\circ\varphi_{\infty}\circ 2.
\]
In particular, $\varphi_{2^{r+1}}\circ\imath^r_{r+1}\circ\rho_{r}^\infty=\imath^r_{r+1}\circ\varphi_{2^r}\circ\rho_{r}^\infty$. But $\rho_{r}^\infty$ is surjective, so $\varphi_{2^{r+1}}\circ\imath^r_{r+1}=\imath^r_{r+1}\circ\varphi_{2^r}$.

Second, we prove~\ref{lemma_commutator property_2^r}. Observe that 
\[
2[\mathcal{P}_r,\varphi](x)=[\mathcal{P}_r,\varphi](2x)=4[\mathcal{P}_r,\varphi](x)
\]
due to Lemma~\ref{lemma_Pontryagin properties}~\ref{lemma_P properties const square} and Statement~\ref{lemma_commutator property_linear}.
Therefore $2[\mathcal{P}_r,\varphi](x)=0$ and Statement~\ref{lemma_commutator property_2^r} follows.

Third, we prove~\ref{lemma_commutator property_zero on integral}.
Let $x\in H^2(X;\mathbb{Z}/2^{r+1})$. Using the equation $\varphi_{2^r}\circ\rho_{r}^{r+1}=\rho_{r}^{r+1}\circ\varphi_{2^{r+1}}$ together with Lemma~\ref{lemma_Pontryagin properties}~\ref{lemma_P properties reduct n sq} and that fact that $\varphi_{2^{r+1}}$ is a ring homomorphism gives
\[
[\mathcal{P}_r,\varphi](\rho^{r+1}_{r}(x))=\mathcal{P}_r(\varphi_{2^r}(\rho_{r}^{r+1}(x)))-\varphi_{2^{r+1}}(\mathcal{P}_r(\rho_{r}^{r+1}(x)))=\varphi_{2^{r+1}}(x)^2-\varphi_{2^{r+1}}(x^2)=0.
\]

Lastly, we prove~\ref{lemma_commutator of composition} and~\ref{lemma_commutator of inverse}. Noting the order reversal in Equation~\eqref{eq_composition}, Statement~\ref{lemma_commutator of composition} may be checked directly. Statement~\ref{lemma_commutator of inverse} then follows by using~\ref{lemma_commutator of composition} to get
\[
0=[\mathcal{P}_r,id]=[\mathcal{P}_r,\varphi\circ\varphi^{-1}]=\varphi^{-1}_{2^{r+1}}\circ[\mathcal{P}_r,\varphi]+[\mathcal{P}_r,\varphi^{-1}]\circ\varphi_{2^{r}}.
\]
Rearranging this equation and using the fact from Statement~\ref{lemma_commutator property_2^r} that each commutator has order $2$ yields the result.
\end{proof}

We now come to the main result of this section, which computes an upper bound on the index $h(X)$.

\begin{theorem}\label{prop_h(X)<2}
Let $X$ be a complex in $\mathscr{C}_{n,\tor}$. If $\tor$ is odd, then $h(X)=1$. Otherwise, $h(X)\leq 2$.
\end{theorem}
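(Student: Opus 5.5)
The plan is to work entirely with attaching map coefficients, using Lemmas~\ref{lemma_basis define skeleton}, \ref{lemma_hilton milnor coeff and cup prod} and~\ref{lemma_criteria}. Given $X'\in PI(X)$, pick a proper isomorphism $\varphi\colon X'\dashrightarrow X$ and a cellular basis $\mathcal{B}=\{u_1,\ldots,u_n;w;v\}$ for $X$. By Lemma~\ref{lemma_basis define skeleton}, the image $\mathcal{B}'$ of $\mathcal{B}$ under $\varphi$ is a cellular basis for $X'$. Let $(A,\textbf{b},c)$ and $(A',\textbf{b}',c')$ be the associated attaching map coefficients.

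\emph{Step 1: compare coefficients.} Exactly as in the proof of Lemma~\ref{lemma_criteria}, ring compatibility of $\varphi_\infty$ and $\varphi_\tor$ together with Lemma~\ref{lemma_hilton milnor coeff and cup prod}\ref{lemma_hilton milnor coeff and cup prod_u cup u}--\ref{lemma_hilton milnor coeff and cup prod_w cup w} gives $A=A'$, $\textbf{b}=\textbf{b}'$, and $c\equiv c'\pmod{\tor}$. So $c'\in\{c,\,c+\tor\}\subset\Z/2\tor$.

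\emph{Step 2: conclude.} By the Hilton--Milnor decomposition~\eqref{eqn_hilton milnor}, the attaching map of $X'$ is homotopic either to $f$ or to $f+\tor\bar\eta$. Hence $X'\simeq X$ or $X'\simeq X_1$, where $X_1$ is the mapping cone of $f+\tor\bar\eta$ on $\bigvee S^2_i\vee P^3(\tor)$. The complex $X_1$ depends only on $X$ and the choice of $\mathcal{B}$, not on $X'$. Therefore $PI(X)$ contains at most the two homotopy types of $X$ and $X_1$, so $h(X)\le 2$.

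When $\tor$ is odd, $\pi_3(P^3(\tor))\cong\Z/\tor$, so $c$ already lives in $\Z/\tor$. In this case $c\equiv c'$ forces $f\simeq f'$ and hence $h(X)=1$; this is the result of~\cite{FSS0}. In the odd case one should note that the decomposition~\eqref{eqn_hilton milnor} was stated only for even $\tor$. Its odd analogue, with $c\in\Z/\tor$, follows from the same Hilton--Milnor argument, and the cup-product identifications then come from~\cite[Lemma~4.1]{FSS0}.

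The main subtlety is making sure that the remaining coefficients $a_{jk}$ and $b_\ell$ are recovered exactly, not merely up to unit changes. This works because the bases $\mathcal{B}$ and $\mathcal{B}'$ are matched by $\varphi$, so no change-of-basis ambiguity arises. We also need that the Hilton--Milnor summands are independent, so that equal coefficients give homotopic maps. Both points are already used in Lemma~\ref{lemma_criteria}, so the argument is essentially that proof with the Pontryagin-square hypothesis removed.
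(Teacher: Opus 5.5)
Your proposal is correct, but it reaches the bound by a different route from the paper.

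The paper never identifies the second homotopy type. It works with the commutator $[\mathcal{P}_r,\varphi]$ and uses Lemma~\ref{lemma_properties of commutator}: additivity, order $2$, vanishing on reductions of integral classes, and the composition and inverse formulas. With these it shows that if $\varphi\colon X\dashrightarrow X'$ and $\varphi'\colon X\dashrightarrow X''$ both have nonzero commutator, then both commutators vanish on the integral classes and must take the unique element of order $2$ on the torsion generator. They therefore cancel in the composition formula, so $\theta=\varphi'\circ\varphi^{-1}$ commutes with $\mathcal{P}_r$, and Lemma~\ref{lemma_criteria} gives $X'\simeq X''$.

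You instead rerun the coefficient comparison from the proof of Lemma~\ref{lemma_criteria} with the Pontryagin-square hypothesis removed. Once the cellular basis is transported by Lemma~\ref{lemma_basis define skeleton}, the only undetermined coefficient is $c\in\Z/2\tor$, and it is known modulo $\tor$, so only two attaching maps are possible.

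Your argument is more elementary: it needs neither Lemma~\ref{lemma_properties of commutator} nor part~\ref{lemma_hilton milnor coeff and cup prod_P(w)} of Lemma~\ref{lemma_hilton milnor coeff and cup prod}. It also names the only possible second homotopy type explicitly, as the cone on $f+\tor\bar\eta$.

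The paper's argument stays on the cohomological side. It packages the whole obstruction as the single class $[\mathcal{P}_r,\varphi](w)\in\{0,2^r\}$ and shows that nonzero obstructions cancel under composition. This is the viewpoint reused in Theorem~\ref{thm_cohmlg rigidity holds for r=n=1}, where a given proper isomorphism is modified to kill its commutator.

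For odd $\tor$ the two arguments agree in substance: both reduce to $c\in\Z/\tor$ being read off from $w\cup w$, as in~\cite{FSS0}.
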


\begin{proof}
Suppose given proper isomorphisms $\varphi\colon X\dashrightarrow X'$ and $\varphi'\colon X\dashrightarrow X''$ such that $[\mathcal{P}_r,\varphi]\neq0$ and $[\mathcal{P}_r,\varphi']\neq0$. Putting $\theta=\varphi'\circ\varphi^{-1}\colon X'\dashrightarrow X''$ yields a proper isomorphism and Lemma~\ref{lemma_properties of commutator} implies that
\[
[\mathcal{P}_r,\theta]=\varphi^{-1}_{2^{r+1}}\circ[\mathcal{P}_r,\varphi']+\varphi^{-1}_{2^{r+1}}\circ[\mathcal{P}_r,\varphi]\circ\theta_{2^{r}}.
\]

Now, if $\{u_1,\ldots,u_n;w;v\}$ is a cellular basis for $X''$, then lemma~\ref{lemma_properties of commutator} implies that $[\mathcal{P}_r,\theta]$ and $[\mathcal{P}_r,\varphi']$ vanish on each $u_i$. Thus $[\mathcal{P}_r,\varphi]$ vanishes on each $\theta(u_i)$. Since $[\mathcal{P}_r,\varphi]$ and $[\mathcal{P}_r,\varphi']$ are nontrivial, $[\mathcal{P}_r,\varphi'](w)$ and $[\mathcal{P}_r,\varphi](\theta_{2^r}(w))$ are nontrivial. But the commutators have order $2$, so $[\mathcal{P}_r,\varphi'](w)=[\mathcal{P}_r,\varphi](\theta_{2^r}(w))$ is the unique element of order $2$ in $H^4(X;\mathbb{Z}/2^{r+1})$. Hence $\varphi_{2^{r+1}}([\mathcal{P}_r,\theta](w))=0$, implying that $[\mathcal{P}_r,\theta]=0$. It follows now from Lemma~\ref{lemma_criteria} that $\theta$ is induced by a homotopy equivalence $X'\simeq X''$. We conclude that $h(X)\leq2$. 

Finally, if $\tor$ is odd, then $\mathcal{P}_r$ acts trivially on $H^*(X)$ and $H^*(X')$. Therefore $[\mathcal{P}_r,\varphi]$ is always zero and we get $h(X)=1$.
\end{proof}

When $\tor$ is odd, Theorem~\ref{prop_h(X)<2} implies that two complexes $X,X'\in\mathscr{C}_{n,\tor}$ are homotopy equivalent if and only if they are properly isomorphic. If furthermore $X,X'$ are toric orbifolds, then by~\cite[Theorem 1.3]{FSS0} the condition of them being properly isomorphic can be weakened  to their integral cohomology rings being isomorphic. Still, Theorem~\ref{prop_h(X)<2} applies to a wider class of $4$-dimensional complexes, as there exist examples of complexes with isomorphic integral cohomology rings that are not homotopy equivalent.
For example, let $X$ be a $4$-dimensional toric orbifold with $H^3(X;\Z)\cong\Z/3$ and let
\[
f\colon S^3\to\bigvee^n_{i=1}S^2\vee P^3(3)
\]
be the attaching map of the $4$-cell. By~\cite[Theorem 1.2]{FSS0} $f$ factors through $\bigvee^n_{i=1}S^2$. Define $X'$ to be the mapping cone of
$f+\bar{\eta}$, where $\bar{\eta}$ is the generator of $\pi_3(P^3(3))\cong\Z/3$.
By construction, $X$ and $X'$ have isomorphic integral cohomology rings. However, they are not homotopy equivalent, since $H^*(X';\Z/3)$ has a non-trivial cup product, whereas $H^*(X;\Z/3)$ will be proved to be a trivial ring in~\cite{FSS2}.

\section{Cohomological rigidity of toric orbifolds}\label{sect_toric orbifold}

In this section we apply the theory of proper isomorphisms developed in the previous section to investigate the cohomological rigidity of $4$-dimensional toric orbifolds. To begin with, we recall the combinatorial description of toric orbifolds.
Let $P$ be a $d$-dimensional simple convex polytope with facets $F_1,\ldots,F_m$.
Being simple means that each face $E$ of codimension-$k$ in $P$ can be written as the intersection of $k$ facets
\[
E=F_{i_1}\cap\cdots\cap F_{i_k},
\]
where $F_{i_1},\ldots,F_{i_k}$ are unique up to permutation.
A \emph{characteristic function} on $P$ is a map
\[
\lambda\colon\{F_1,\ldots,F_m\}\to\Z^d
\]
such that
\begin{itemize}
\item
for each $1\leq i\leq m$, the \emph{characteristic vector} $\lambda_i=\lambda(F_i)$ is primitive
\item
if $F_{i_1},\ldots,F_{i_k}$ have non-empty intersection, then $\lambda_{i_1},\ldots,\lambda_{i_k}$ are linearly independent.
\end{itemize}

Given a simple polytope $P$ and a characteristic function $\lambda$, the associated \emph{toric orbifold} is the quotient space
\[
X=P\times T^d/\sim,
\]
where $\sim$ is the relation defined by letting $(p,g)\sim(q,h)$ if and only if both the following conditions are met
\begin{enumerate}
\item
$p=q$;
\item
$gh^{-1}\in\{\exp(\sum^k_{j=1}t_j\lambda_{i_j})\mid t_1,\ldots,t_k\in\R\}$ when $p$ is an interior point in $F_{i_1}\cap\cdots\cap F_{i_k}$.
\end{enumerate}
Denote by $[p,g]$ the image of $(p,g)\in P\times T^d$ in $X$ and let $T^d$ act on $X$ as
\[
t\cdot[p,g]=[p,t\cdot g]
\quad
\text{for }t\in T^d.
\]
The orbit space of this action is $P$ and the orbit projection $\pi\colon X\to P$ is $\pi([p,g])=p$.

In this paper we focus on the case where $d=2$. Then $P$ is a two dimensional convex polygon with $m$ edges. For convenience we write $m=n+2$. Label vertices and edges of $P$ by $v_1,\ldots,v_{n+2}$ and $F_1,\ldots,F_{n+2}$ as shown in Figure~\ref{fig_polygon}.
Throughout the paper, all indices are taken modulo $n+2$, that is $F_{n+3}=F_1$ and $F_{0}=F_{n+2}$.

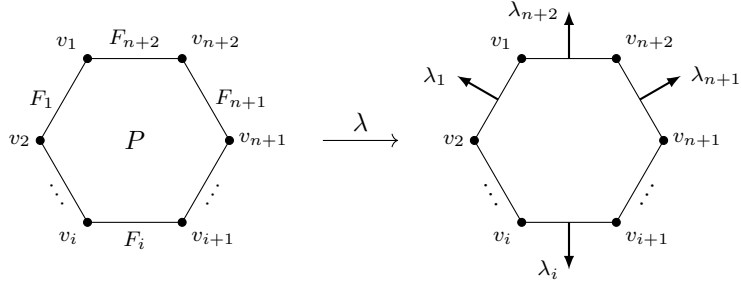
\begin{figure}
\begin{tikzpicture}
\node[opacity=0, regular polygon, regular polygon sides=6, draw, minimum size = 2.5cm](m) at (0,0) {};
\coordinate (1) at (m.corner 1); \coordinate (2) at (m.corner 2); \coordinate (3) at (m.corner 3); 
\coordinate (4) at (m.corner 4); 
\coordinate (5) at (m.corner 5); 
\coordinate (6) at (m.corner 6); 

\draw (1)--(2)--(3)--(4)--(5)--(6)--cycle;

\foreach \a in {1,...,6} {
\draw[fill] (\a) circle (1.5pt); 
}

 \node at (0,0) {$P$};

 \node[above] at (m.side 1) {\footnotesize$F_{n+2}$};
 \node[left] at (m.side 2) {\footnotesize$F_{1}$};
 \node[right] at (m.side 6) {\footnotesize$F_{n+1}$};
 \node[below] at (m.side 4) {\footnotesize$F_{i}$};
\node[left, rotate=30] at (m.side 3) {\footnotesize$\vdots$};
 \node[right, rotate=-30] at (m.side 5) {\footnotesize$\vdots$};

 \node[above right] at (m.corner 1){\footnotesize$v_{n+2}$};
 \node[above left] at (m.corner 2){\footnotesize$v_{1}$};
 \node[left] at (m.corner 3){\footnotesize$v_{2}$};
 \node[below left] at (m.corner 4){\footnotesize$v_{i}$};
 \node[below right] at (m.corner 5){\footnotesize$v_{i+1}$};
 \node[right] at (m.corner 6){\footnotesize$v_{n+1}$};

\draw[->] (2.5,0)--(3.5,0);
\node[above] at (3,0) {$\lambda$};

\begin{scope}[xshift=150]
\node[opacity=0, regular polygon, regular polygon sides=6, draw, minimum size = 2.5cm](n) at (0.5,0) {};
\coordinate (1) at (n.corner 1); 
\coordinate (2) at (n.corner 2); 
\coordinate (3) at (n.corner 3); 
\coordinate (4) at (n.corner 4); 
\coordinate (5) at (n.corner 5); 
\coordinate (6) at (n.corner 6); 

 \draw (1)--(2)--(3)--(4)--(5)--(6)--cycle;

\foreach \a in {1,...,6} {
\draw[fill] (\a) circle (1.5pt); 
}
\end{scope}

\node[left, rotate=30] at (n.side 3) {\small$\vdots$};
 \node[right, rotate=-30] at (n.side 5) {\small$\vdots$};

 \node[above right] at (n.corner 1){\footnotesize$v_{n+2}$};
 \node[above left] at (n.corner 2){\footnotesize$v_{1}$};
 \node[left] at (n.corner 3){\footnotesize$v_{2}$};
 \node[below left] at (n.corner 4){\footnotesize$v_{i}$};
 \node[below right] at (n.corner 5){\footnotesize$v_{i+1}$};
 \node[right] at (n.corner 6){\footnotesize$v_{n+1}$};

\draw [-latex, thick] (n.side 1) -- ($(n.side 1)!1!90:(n.corner 1)$) node [left] at ($(n.side 1)!1!90:(n.corner 1)$){\footnotesize$\lambda_{n+2}$};
\draw [-latex, thick] (n.side 2) -- ($(n.side 2)!1!90:(n.corner 2)$) node [left] at ($(n.side 2)!1!90:(n.corner 2)$){\footnotesize$\lambda_1$};
\draw [-latex, thick] (n.side 4) -- ($(n.side 4)!1!90:(n.corner 4)$) node [left] at ($(n.side 4)!1!90:(n.corner 4)$){\footnotesize$\lambda_i$};
\draw [-latex, thick] (n.side 6) -- ($(n.side 6)!1!90:(n.corner 6)$) node [right] at ($(n.side 6)!1!90:(n.corner 6)$){\footnotesize$\lambda_{n+1}$};
\end{tikzpicture}
\caption{Labels of edges and vertices in polygon $P$}\label{fig_polygon}
\end{figure}

Thanks to the work of~\cite[Lemma 3.1]{Fis}, \cite[Theorem 2.5.5]{Jor} and~\cite[Corollary 5.1]{KMZ}, the cohomology groups of a $4$-dimensional toric orbifold are given as follows:
\[
\begin{tabular}{C{2.4cm}|C{1cm}|C{1cm}|C{1cm}|C{1cm}|C{1cm}|C{1cm}}
$i$		&$0$	&$1$	&$2$	&$3$	&$4$	&$\geq5$\\
\hline
$H^i(X;\Z)$	&$\Z$	&$0$	&$\Z^n$	&$\Z/\tor$	&$\Z$	&$0$
\end{tabular}
\]
where $\tor$ is the greatest common divisor $\gcd\{\det(\lambda_i,\lambda_j)\mid 1\leq i<j\leq n+2\}$.

If $X$ is a $4$-dimensional toric orbifold, $n$ is the rank of $H^2(X;\Z)$, and $\tor$ is the order of $H^3(X;\Z)$, then~$X$ is contained in $\mathscr{C}_{n,\tor}$, since it is simply-connected and by~\cite[Theorem~4H.3]{Hat} admits a minimal cell structure of the form
\begin{equation}\label{eq_toric orb min cell structure}
X\simeq\left(\bigvee^n_{i=1}S^2\vee P^3(\tor)\right)\cup e^4.
\end{equation}
To understand the cohomological rigidity of $4$-dimensional toric orbifolds, Lemma~\ref{lemma_criteria} indicates that we need to study how the Pontryagin square
\[
\mathcal{P}_r\colon H^2(-;\Z/2^r)\to H^4(-;\Z/2^{r+1}),
\quad
\text{where }\nu_2(\tor)=2^r,
\]
acts on the cohomology groups of $X$.
We will show that this action is non-trivial by using the $q$-CW structure of $4$-dimensional toric orbifolds introduced in~\cite{BSS}.

Draw a line segment $\ell$ intersecting $F_{n+1}$ and $F_{n+2}$. The neighbourhood of $v_{n+2}$ bounded by~$\ell$ is the join $\ell*\{v_{n+2}\}$. Letting $P'=\overline{P\setminus \ell*\{v_{n+2}\}}$ be the closure of its complement there is a decomposition
\[
X\cong\pi^{-1}(\ell*\{v_{n+2}\})\cup_{\pi^{-1}(\ell)}\pi^{-1}(P').
\]
See Figure~\ref{fig_q CW}.

\begin{figure}
\begin{tikzpicture}[scale=0.6]

\draw[fill=yellow, thick](30:2)--(90:2)--(150:2)--(210:2)--(270:2)--(330:2)--cycle;
\draw[fill=cyan, thick] (60:1.7)--(90:2)--(120:1.7)--cycle;

\foreach \vangle in {30, 90,150,210,270,330}
\draw[fill] (\vangle:2) circle (2pt); 

\node[right] at (30:2) {\footnotesize$v_{n+1}$};
\node[above] at (90:2) {\footnotesize$v_{n+2}$};
\node[left] at (150:2) {\footnotesize$v_1$};
\node[left] at (210:2) {\footnotesize$v_2$};
\node[below] at (270:2) {\footnotesize$\cdots$};
\node[right] at (330:2) {\footnotesize$v_{n}$};

\node[right] at (60:2) {\footnotesize$F_{n+1}$};
\node[left] at (120:2) {\footnotesize$F_{n+2}$};
\node[left] at (180:2) {\footnotesize$F_1$};
\node[right] at (0:2) {\footnotesize$F_{n}$};
\draw[fill] (60:1.7) circle (1pt); 
\draw[fill] (120:1.7) circle (1pt); 
\draw[red, very thick] (60:1.7)--(120:1.7);
\node at (90:1) {$\ell$};
\node at (0:4.2) {$=$};

\begin{scope}[xshift=230]
\draw[fill=yellow, thick](30:2)--(60:1.7)--(120:1.7)--(150:2)--(210:2)--(270:2)--(330:2)--cycle;
\draw[red, very thick] (60:1.7)--(120:1.7);
\node at (90:1) {$\ell$};
\foreach \vangle in {30,150,210,270,330}
\draw[fill] (\vangle:2) circle (2pt); 

\node[right] at (30:2) {\footnotesize$v_{n+1}$};
\node[left] at (150:2) {\footnotesize$v_1$};
\node[left] at (210:2) {\footnotesize$v_2$};
\node[below] at (270:2) {\footnotesize$\cdots$};
\node[right] at (330:2) {\footnotesize$v_{n}$};

\node[left] at (180:2) {\footnotesize$F_1$};
\node[right] at (0:2) {\footnotesize$F_{n}$};

\node at (0:4.2) {\Large$\cup$};

\end{scope}

\begin{scope}[xshift=410]

\draw[fill=cyan, thick] (60:1.7)--(90:2)--(120:1.7)--cycle;

\draw[fill] (90:2) circle (2pt);
\node[above] at (90:2) {\footnotesize$v_{n+2}$};

\draw[red, very thick] (60:1.7)--(120:1.7);
\node at (90:1) {$\ell$};

\end{scope}
\end{tikzpicture}
\caption{$q$-CW structure of $X(P,\lambda)$}\label{fig_q CW}
\end{figure}
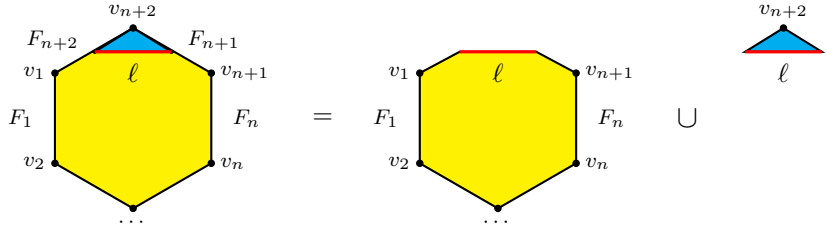

Note that $\pi^{-1}(\ell*\{v_{n+2}\})$ is homeomorphic to the cone of $\pi^{-1}(\ell)$, so $X$ is the mapping cone of the inclusion $\pi^{-1}(L)\hookrightarrow\pi^{-1}(P')$.
Following the arguments in~\cite{BSS,FSS1,So},
one can show that~$\pi^{-1}(\ell)$ is homeomorphic to a lens space $L=L(b,a)$, where $b=|\det(\lambda_{n+1},\lambda_{n+2})|$, and the deformation retraction $P'\overset{\simeq}{\longrightarrow}\bigcup^{n}_{i=1}F_i$ induces a homotopy equivalence $\pi^{-1}(P')\simeq\bigvee^{n}_{i=1}S^2_i$. Here~$S^2_i$ is a pointed~$2$-sphere obtained from $\pi^{-1}(F_i)$.
Let $f\colon L\to\bigvee^{n}_{i=1}S^2_i$ be the composite
\[
f\colon L\cong\pi^{-1}(\ell)\hookrightarrow\pi^{-1}(P')\simeq\bigvee^{n}_{i=1}S^2_i.
\]
There is a homotopy cofibration sequence
\begin{equation}\label{eqn_toric orb cofib seq}
L\overset{f}{\longrightarrow}\bigvee^{n}_{i=1}S^2_i\longrightarrow X.
\end{equation}
Note that the wedge $\bigvee^n_{i=1}S^2_i$ in~\eqref{eqn_toric orb cofib seq} should not be confused with the wedge $\bigvee^n_{i=1}S^2$ in~\eqref{eq_toric orb min cell structure}. The cohomology classes corresponding to the latter wedge form a basis for $H^2(X;\Z)$, whereas those of the former do not.

\begin{lemma}\label{lemma_Pontryagin of torsion element in X}
Let $X$ be a $4$-dimensional toric orbifold where $H^3(X;\Z)\cong\Z/\tor$. If $\nu_2(\tor)=2^r$ and~$r\geq 1$, then there exists a generator $w\in H^2(X;\Z/2^r)$ such that
\[
\beta_{r}(w)\neq 0
\quad\text{and}\quad
\mathcal{P}_r(w)\equiv 2^r\pmod{2^{r+1}}.
\]
\end{lemma}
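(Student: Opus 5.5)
We will use the cofibration sequence~\eqref{eqn_toric orb cofib seq}, $L\xrightarrow{f}\bigvee^n_{i=1}S^2_i\to X$, where $L=L(b,a)$ with $b=|\det(\lambda_{n+1},\lambda_{n+2})|$. The element $w$ will be the image of a generator of $H^1(L;\Z/2^r)$ under the connecting homomorphism $\delta$, and we will compute $\mathcal{P}_r(w)$ with Lemma~\ref{lemma_Postnikov properties}~\ref{lemma_Postnikov properties_commute with connecting maps} and Proposition~\ref{lemma_Postnikov sq in L}.

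\textbf{Step 1: choose the edge labelling so that $\nu_2(b)=2^r$.} Since $\tor=\gcd\{\det(\lambda_i,\lambda_j)\}$, every $\det(\lambda_i,\lambda_j)$ is divisible by $2^r$. We must arrange that some adjacent pair $\lambda_{n+1},\lambda_{n+2}$ has determinant with $2$-adic valuation exactly $r$. For this we use the standard fact that the gcd of all the $\det(\lambda_i,\lambda_j)$ equals the gcd over adjacent pairs $\det(\lambda_i,\lambda_{i+1})$; this should follow from the cited computation of $H^3$ (\cite{KMZ,Fis}). Hence some vertex has $\nu_2(\det)=2^r$, and we relabel cyclically so that this vertex is $v_{n+2}$. \emph{This is the step I expect to need the most care.} If only the weaker statement that the gcd over adjacent pairs is $\tor$ is available, then choosing the adjacent pair with minimal $2$-adic valuation still gives valuation $r$.

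\textbf{Step 2: read off the long exact sequence.} With mod $2^s$ coefficients, the sequence of~\eqref{eqn_toric orb cofib seq} contains
\[
H^1(\vee S^2_i)=0\to H^1(L)\xrightarrow{\delta}H^2(X)\to H^2(\vee S^2_i)\xrightarrow{f^*}H^2(L).
\]
In degree $3$ we have $H^3(\vee S^2_i)=0=H^4(\vee S^2_i)$, so $\delta\colon H^3(L;\Z/2^{r+1})\to H^4(X;\Z/2^{r+1})$ is an isomorphism (the next term $H^4(\vee S^2_i)$ vanishes, and the previous one, $H^3(\vee S^2_i)$, vanishes too).

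Let $x_r\in H^1(L;\Z/2^r)\cong\Z/2^r$ be a generator and put $w=\delta(x_r)$. Then $w\neq 0$ and has order $2^r$, because $\delta$ is injective.

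\textbf{Step 3: $w$ generates the torsion part and $\beta_r(w)\neq0$.} The connecting homomorphisms commute with Bocksteins up to sign, so $\beta_r(w)=\pm\delta(\beta_r x_r)$ in $H^3(X;\Z/2^r)$. By the computation quoted from~\cite{Hat}, $\beta_r x_r=y_r$ generates $H^2(L;\Z/2^r)$. It remains to see that $\delta\colon H^2(L;\Z/2^r)\to H^3(X;\Z/2^r)$ is injective, i.e.\ that $f^*\colon H^2(\vee S^2_i;\Z/2^r)\to H^2(L;\Z/2^r)$ is zero.

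Integrally, $f^*$ into $H^2(L;\Z)\cong\Z/b$ has cokernel $H^3(X;\Z)\cong\Z/\tor$. Reducing mod $2^r$ and using $\nu_2(\tor)=\nu_2(b)=2^r$, the cokernel of $f^*$ on mod-$2^r$ groups is all of $\Z/2^r$, so this $f^*$ is zero. Hence $\beta_r(w)\neq0$, and so $w$ is a generator in the sense required: it is not a reduction of an integral class.

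\textbf{Step 4: the Pontryagin square.} By Lemma~\ref{lemma_Postnikov properties}~\ref{lemma_Postnikov properties_commute with connecting maps},
\[
\mathcal{P}_r(w)=\mathcal{P}_r(\delta x_r)=\delta\,\mathcal{P}'_r(x_r).
\]
By Proposition~\ref{lemma_Postnikov sq in L} with $s=r$, $\mathcal{P}'_r(x_r)$ is the element $2^r$ of order $2$ in $H^3(L;\Z/2^{r+1})$. Since $\delta$ is an isomorphism onto $H^4(X;\Z/2^{r+1})\cong\Z/2^{r+1}$, we conclude $\mathcal{P}_r(w)\equiv2^r\pmod{2^{r+1}}$ with respect to a generator.
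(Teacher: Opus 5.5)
Your proposal is correct and matches the paper's proof step for step. The paper obtains the vertex with $\nu_2(\det(\lambda_i,\lambda_{i+1}))=\nu_2(\tor)$ by citing \cite[Lemma~5.2]{FSS0}; your ``standard fact'' does hold, since primitive vectors with determinant $\equiv 0 \pmod{2^{r+1}}$ are unit multiples of each other mod $2^{r+1}$, so if every adjacent determinant were divisible by $2^{r+1}$ then every determinant would be. It then takes $w=\delta(x_r)$ and combines naturality of $\beta_r$, Lemma~\ref{lemma_Postnikov properties}~\ref{lemma_Postnikov properties_commute with connecting maps} and Proposition~\ref{lemma_Postnikov sq in L} exactly as you do. The only difference is cosmetic: you show $\delta\colon H^2(L;\Z/2^r)\to H^3(X;\Z/2^r)$ is injective via right-exactness of $-\otimes\Z/2^r$ on the integral cokernel, whereas the paper simply notes it is a surjection between two groups of order $2^r$.
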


\begin{proof}
By~\cite[Lemma~5.2]{FSS0} there exists a vertex $v_i\in P$ such that $\nu_2(\det(\lambda_i,\lambda_{i+1}))=\nu_2(\tor)$.
Relabelling vertices if necessary, we may assume that this is $v_{n+2}$.
Applying the above construction, we obtain a homotopy cofibration sequence~\eqref{eqn_toric orb cofib seq} in which the lens space $L=L(b,a)$ satisfies $\nu_2(b)=\nu_2(\tor)$.
The homotopy cofibration sequence induces an exact sequence
\[
H^2(L;\Z/2^r)\overset{\delta}{\to}H^3(X;\Z/2^r)\to 0 \to H^3(L;\Z/2^r)\overset{\delta}{\to}H^4(X;\Z/2^r)\to 0.
\]
Since $H^2(L;\Z/2^r)\cong H^3(X;\Z/2^r)\cong\Z/2^r$, both connecting maps $\delta$ are isomorphisms.

Let $x_r$ be a generator of $H^1(L;\Z/2^r)$. Then $w=\delta(x_r)$ is a generator of $H^2(X;\Z/2^r)$.
By Lemma~\ref{lemma_Postnikov properties}~\ref{lemma_Postnikov properties_commute with connecting maps} there is a commutative diagram
\[
\xymatrix{
H^1(L;\Z/2^r)\ar[r]^-{\delta}\ar[d]^-{\mathcal{P}'_r}	&H^2(X;\Z/2^{r})\ar[d]^-{\mathcal{P}_r}\\
H^3(L;\Z/2^{r+1})\ar[r]^-{\delta}					&H^4(X;\Z/2^{r+1}).
}
\]
By our choice of $v_{n+2}$, the lens space $L=L(b,a)$ satisfies $\nu_2(b)=\nu_2(\tor)$. It follows from Lemma~\ref{lemma_Postnikov sq in L}~\ref{lemma_Postnikov properties_commute with connecting maps} that $\mathcal{P}'_r(x_r)\equiv 2^r\pmod{2^{r+1}}$.
Since the bottom $\delta$ is an isomorphism, we have
\[
\mathcal{P}_r(w)\equiv2^r\pmod{2^{r+1}}.
\]

It remains to show that $w$ has a non-trivial Bockstein image. By the naturality of Bockstein homomorphism, the diagram
\[
\xymatrix{
H^1(L;\Z/2^r)\ar[r]^-{\delta}\ar[d]^-{\beta_{r}}	&H^2(X;\Z/2^{r})\ar[d]^-{\beta_{r}}\\
H^2(L;\Z/2^{r})\ar[r]^-{\delta}					&H^3(X;\Z/2^{r})
}
\]
commutes. Since the bottom $\delta$ is an isomorphism and $\beta_{r}(x_r)$ is non-trivial, $\beta_{r}(w)$ is non-trivial.
\end{proof}

Next, we apply Lemma~\ref{lemma_criteria} to prove the cohomological rigidity for a special case of $4$-dimensional toric orbifolds.

\begin{theorem}\label{thm_cohmlg rigidity holds for r=n=1}
Let $X$ and $X'$ be $4$-dimensional toric orbifolds such that
\[
H^1(X;\Z)\cong H^1(X';\Z)\cong\Z^n
\quad\text{and}\quad
H^3(X;\Z)\cong H^3(X';\Z)\cong\Z/\tor.
\]
Suppose $\nu_2(\tor)=2$ and $n=1$.
Then $X$ and $X'$ are homotopy equivalent if and only if they are properly isomorphic.
\end{theorem}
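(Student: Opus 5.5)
By Lemma~\ref{lemma_criteria} it suffices to show the following: if $\varphi\colon X'\dashrightarrow X$ is any proper isomorphism, then either $[\mathcal{P}_1,\varphi]=0$ already, or $\varphi$ can be altered to a proper isomorphism with vanishing commutator. Here $r=1$, so the relevant square is $\mathcal{P}_1\colon H^2(-;\Z/2)\to H^4(-;\Z/4)$. Since $n=1$ and $\nu_2(\tor)=2$, we have
\[
H^2(X;\Z/2)\cong\Z/2\langle u\rangle\oplus\Z/2\langle w\rangle ,
\]
where $u$ is the reduction of an integral generator and $w$ is the class of Lemma~\ref{lemma_Pontryagin of torsion element in X}. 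That lemma gives $\beta_1(w)\neq0$ and $\mathcal{P}_1(w)=2$ in $H^4(X;\Z/4)\cong\Z/4$. The same holds for $X'$ with classes $u',w'$. The class $u$ is the reduction of an integral, hence mod~$4$, class. So by Lemma~\ref{lemma_properties of commutator}~\ref{lemma_commutator property_linear} and~\ref{lemma_commutator property_zero on integral}, $[\mathcal{P}_1,\varphi]$ is determined by its single value $[\mathcal{P}_1,\varphi](w)\in\{0,2\}$.

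\textbf{Step 1: locate $\varphi_2(w)$.} The Bockstein $\beta_1$ is the composite of the Bockstein for $\Z/2\to\Z/4\to\Z/2$ with reductions. One can express $\ker\beta_1$ as the image of reduction from $\Z/4$-coefficients, and these reductions factor through $\Z/2\tor$ and $\Z/\tor$. Hence $\varphi_2$ preserves $\ker\beta_1$, and therefore $\varphi_2(w)\in\{w',\,w'+u'\}$. In the first case we get
\[
[\mathcal{P}_1,\varphi](w)=\mathcal{P}_1(w')-\varphi_4(2v)=2-2=0,
\]
using that $\varphi_4$ acts on $H^4(-;\Z/4)\cong\Z/4$ by $\pm1$, which fixes $2$. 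So only the case $\varphi_2(w)=w'+u'$ remains.

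\textbf{Step 2: compute in the remaining case.} Write $(a',b',c')$ for the attaching map coefficients of $X'$. Lemma~\ref{lemma_Pontryagin properties}~\ref{lemma_P properties almost linear} and Lemma~\ref{lemma_hilton milnor coeff and cup prod} give
\[
\mathcal{P}_1(w'+u')=2+a'+2b'\pmod 4 .
\]
Since $\varphi_2$ is a ring map and $w^2=\rho(\mathcal{P}_1 w)=0$ mod $2$, we also have $(w'+u')^2=0$ mod~$2$, so $a'$ is even. Thus the whole problem reduces to proving $a'+2b'\equiv0\pmod 4$.

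\textbf{Step 3: the main obstacle, proving $a'+2b'\equiv0\pmod 4$.} I would first try to show this directly from the toric structure of $X'$. In Lemma~\ref{lemma_Pontryagin of torsion element in X}, the class $w$ came from a vertex $v_i$ with $\nu_2(\det(\lambda_i,\lambda_{i+1}))=2$. For $n=1$ the polygon is a triangle. If at least two vertices have determinant with $2$-part exactly $2$, the two resulting lens-space cofibrations produce two different classes with nonzero Bockstein, namely $w'$ and $w'+u'$. Both then have $\mathcal{P}_1=2$, and Step~2 is finished.

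If instead only one vertex qualifies, I would compute $a'$ and $b'$ explicitly from the characteristic vectors. This uses the cofibration~\eqref{eqn_toric orb cofib seq} with a lens space $L\to S^2$: the cup products $u'^2$ and $u'w'$ are read off from the degrees $\det(\lambda_j,\lambda_k)$. I would then check the congruence mod~$4$ directly.

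\textbf{Fallback.} If the congruence fails in some configuration, I would instead construct a self proper isomorphism $\psi$ of $X'$. It would fix $u'$ and $v'$ and send $w'\mapsto w'+ku'$ with $k$ odd, lifted compatibly to $\Z/2\tor$. The ring conditions are $ka'\equiv0$ and $2kb'+k^2a'\equiv0\pmod{\tor}$. Its commutator is $a'+2b'\equiv2$, so Lemma~\ref{lemma_properties of commutator}~\ref{lemma_commutator of composition} shows that $\varphi\circ\psi$ has zero commutator.

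In either route, Lemma~\ref{lemma_criteria} then yields $X\simeq X'$. The converse direction is immediate, since a homotopy equivalence induces a proper isomorphism.
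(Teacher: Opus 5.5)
Your proof is correct once one detail in the fallback is filled in, and that fallback is in substance the paper's argument. The paper replaces $\varphi_g(w)$ by $w'+\frac{g}{2}u'$, which is exactly composing $\varphi$ with your self-map $\psi$ for $k=g/2$. The rest of your proposal reorganises the argument, arguably more cleanly. Applying the toric lemma to both $X$ and $X'$ reduces everything to the dichotomy $\varphi_2(w)\in\{w',w'+u'\}$, in place of the paper's cellular bookkeeping and its swap of $X$ and $X'$. You also get $a'$ even for free, from multiplicativity of $\varphi_2$ and $w^2=\rho(\mathcal{P}_1w)=0$, whereas the paper imports $u'\cup u'\equiv 0\pmod 2$ from the literature. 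Since ``the congruence holds'' and ``it fails'' exhaust the cases, the fallback alone finishes the proof.

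The missing detail is the choice of $k$: it cannot be an arbitrary odd number, since $ka'\equiv 0\pmod g$ fails for $k=1$ unless $g\mid a'$. Take $k=g/2$, which is odd exactly because $\nu_2(g)=2$; this is where that hypothesis really enters. Then:
\begin{itemize}
\item $ka'\equiv k^2a'\equiv 0\pmod g$ needs only $a'$ even, and $2kb'=gb'\equiv 0$.
\item $\psi_{2g}=\psi_\infty=\mathrm{id}$ is a compatible lift. Indeed, $\psi_g$ fixes the image of $H^2(X';\Z/2g)\to H^2(X';\Z/g)$, which is generated by $u'$ and $2\bar w'$, because $2\cdot\frac{g}{2}u'=0$.
\end{itemize}
Here $\bar w'$ is a mod-$g$ class completing $u'$ to a basis and reducing to your $w'$, for example a cellular class adjusted by $u'$ if necessary.

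Your main route in Step~3 is only a sketch, because the claim that two qualifying vertices give two \emph{different} classes is asserted without argument. The claim is true, and the one-vertex subcase never occurs.
\begin{itemize}
\item \emph{At least two vertices qualify.} Put $d_i=\det(\lambda_j,\lambda_k)$ for $(i,j,k)$ cyclic, so $d_1\lambda_1+d_2\lambda_2+d_3\lambda_3=0$. If only $d_i$ had $2$-part $2$, the other two would be divisible by $4$, forcing $\lambda_i\equiv 0\pmod 2$ and contradicting primitivity.
\item \emph{Distinct vertices give distinct classes.} The class $w_v=\delta(x)$ from the cofibration at $v$ vanishes on the sphere $\pi^{-1}(F_i)$ over the edge opposite $v$. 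It evaluates nontrivially on the mod-$2$ class of $\pi^{-1}(F_j)$ for each edge $F_j$ through $v$. The reason is that the part of that sphere inside the cone is a disc bounded by the orbit circle over $\ell\cap F_j$, and this circle generates $H_1(L)\cong\Z^2/\langle\lambda_j,\lambda_k\rangle$.
\end{itemize}
So the two qualifying vertices give the two classes with $\beta_1\neq 0$, and both have $\mathcal{P}_1=2$. Completed this way, your main route proves more than the paper does: for $n=1$ every proper isomorphism already commutes with $\mathcal{P}_1$, so the modification of $\varphi$ is never actually needed.
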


\begin{proof}
Let $\varphi\colon X'\dashrightarrow X$ be a proper isomorphism.
The theorem holds if $[\mathcal{P}_1,\varphi]=0$ by Lemma~\ref{lemma_criteria}. Therefore we assume $[\mathcal{P}_1,\varphi]\neq0$.

Let $\{u;w;v\}$ be a cellular basis for $X$. By Lemma~\ref{lemma_basis define skeleton} $X'$ has a cellular structure such that
\[
\{u'=\varphi_{\infty}(u);w'=\varphi_{\tor}(w);v'=\varphi_{\infty}(v)\}
\]
is its cellular basis. Since $Im([\mathcal{P}_1,\varphi])=\{0,2\}\subset\Z/4$ and $[\mathcal{P}_1,\varphi](u_i)=0$ by Lemma~\ref{lemma_properties of commutator}~\ref{lemma_commutator property_2^r} and~\ref{lemma_commutator property_zero on integral}, we have
\[
\mathcal{P}_1(w)\equiv\mathcal{P}_1(w')+2\pmod{4}.
\]
Without loss of generality we assume that $\mathcal{P}_1(w)=2$ and $\mathcal{P}_1(w')=0$. Otherwise, interchange the roles of $X$ and $X'$.
By Lemma~\ref{lemma_Pontryagin of torsion element in X}, there exists a generator $\tilde{w}\in H^2(X';\Z/\tor)$ such that
\[
\beta_{1}(\tilde{w})\neq0
\quad\text{and}\quad
\mathcal{P}_1(\tilde{w})\equiv2\pmod{4}.
\]
It must be the case that $\tilde{w}\equiv w'+u'\pmod{2}$. Therefore $\mathcal{P}_1(w)\equiv\mathcal{P}_1(w'+u')\equiv2\pmod{4}$.

Define a new proper isomorphism $\tilde{\varphi}'\colon X'\dashrightarrow X$ by setting
\[
\tilde{\varphi}_{t}(x)=\begin{cases}
w'+\frac{\tor}{2}u'	&\text{if $t=\tor$ and $x=w$}\\
\varphi_{t}(x)			&\text{else}.
\end{cases}
\]
To prove its well-definedness, we need to show that the $\tilde{\varphi}_{t}$'s are ring isomorphisms and make Diagram~\eqref{dgrm_proper isom commute rho} commute.

First, we show that the $\tilde{\varphi}_t$'s are ring isomorphisms. It suffices to check
\begin{equation}\label{eq_check tilde varphi ring isom 1}
\tilde{\varphi}_{\tor}(u)\cup\tilde{\varphi}_{\tor}(w)=\tilde{\varphi}_{\tor}(u\cup w)
\quad\Leftrightarrow\quad
u'\cup(w'+\frac{\tor}{2}u')\equiv u'\cup w'\pmod{\tor}
\end{equation}
and
\begin{equation}\label{eq_check tilde varphi ring isom 2}
\tilde{\varphi}_{\tor}(w)\cup\tilde{\varphi}_{\tor}(w)=\tilde{\varphi}_{\tor}(w\cup w)
\quad\Leftrightarrow\quad
(w'+\frac{\tor}{2}u')\cup(w'+\frac{\tor}{2}u')\equiv w'\cup w'\pmod{\tor}.
\end{equation}
According to~\cite[Theorem 1.2]{FSS1} or~\cite[Corollary 1.4]{So}, we have $u'\cup u'\equiv0\pmod{2}$, implying the right identities in~\eqref{eq_check tilde varphi ring isom 1} and~\eqref{eq_check tilde varphi ring isom 2}:
\[
u'\cup (w'+\frac{\tor}{2}u')=u'\cup w'+\frac{\tor}{2}u'\cup u'\equiv u'\cup w'\pmod{\tor}
\]
and
\[
(w'+\frac{\tor}{2}u')\cup(w'+\frac{\tor}{2}u')=w'\cup w'+\tor w'\cup u'+\frac{\tor^2}{4}u'\cup u'\equiv w'\cup w'\pmod{\tor}.
\]
Therefore the $\tilde{\varphi}_{t}$'s are ring isomorphisms.

Next we show that the $\tilde{\varphi}_{t}$'s make Diagram~\eqref{dgrm_proper isom commute rho} commute.
It suffices to prove that
\begin{equation}\label{eq_check tilde varphi reduction}
\tilde{\varphi}_{\tor}\circ\rho(x)=\rho\circ\tilde{\varphi}_{2\tor}(x)
\end{equation}
where $\rho$ is the reduction $\Z/2\tor\to\Z/\tor$.
Let $\mu$ be the mod-$2\tor$ image of $u\in H^2(X;\Z)$
and let $\imath$ be the inclusion $\Z/\tor\to\Z/2\tor$. Then $\{\mu,\imath(w)\}$ forms a basis for $H^2(X;\Z/2\tor)$.
Equation~\eqref{eq_check tilde varphi reduction} obviously holds for $\mu$. On the other hand, $\rho\circ\imath(w)=2w$ so
\[
\tilde{\varphi}_{\tor}\circ\rho(\imath w)=2(w'+\frac{\tor}{2}u')=2w'=\varphi_{\tor}\circ\rho(\imath(w))=\rho\circ\varphi_{2\tor}(\imath(w))=\rho\circ\tilde{\varphi}_{2\tor}(\imath (w)).
\]
Therefore the $\tilde{\varphi}_{t}$'s make Diagram~\eqref{dgrm_proper isom commute rho} commute and hence $\tilde{\varphi}$ is well-defined.

Now, since $\nu_2(\tor)=2$, we have
$\tilde{\varphi}_{\tor}(w)=w'+\frac{\tor}{2}u'\equiv w'+u'\pmod{2}$. It follows that
\[
[\mathcal{P}_1,\tilde{\varphi}](w)=\mathcal{P}_1\circ\tilde{\varphi}_{\tor}(w)-\tilde{\varphi}_{2\tor}\circ\mathcal{P}_1(w)
\equiv\mathcal{P}_1(w'+u')-\mathcal{P}_1(w)\equiv0\pmod{4}.
\]
By Lemma~\ref{lemma_criteria} there is a homotopy equivalence $X\simeq X'$.
\end{proof}

\begin{remark}\label{remark_extend main proof}
The condition $n=1$ can be dropped, and the proof of Theorem~\ref{thm_cohmlg rigidity holds for r=n=1} extended to the case $\nu_2(\tor)=2$. It requires the fact that $u_j\cup u_k\equiv0\pmod{2}$ for all degree-two cohomology classes in $H^2(X;\Z)$. This will be proved in a forthcoming paper~\cite{FSS2}, currently in preparation.
\end{remark}

Now we have all ingredients to prove the main Theorem.

\begin{proof}[Proof of Theorem~\ref{thm_main}]
Part~\ref{thm_main_part b} is implied by Theorem~\ref{prop_h(X)<2} since $X,X'$ and $X''$ are CW-complexes in the category $\mathscr{C}_{n,\tor}$ by assumption.
Part~\ref{thm_main_part a} follows directly from Theorem~\ref{thm_cohmlg rigidity holds for r=n=1}.
\end{proof}

\bibliographystyle{amsalpha}

\end{document}